\documentclass[final,3p]{elsarticle}

\usepackage{amsmath,amsthm,amssymb,mathrsfs}
\usepackage{enumerate}
\usepackage{slashed}
\usepackage{txfonts,dsfont}
\usepackage{aliascnt}
\usepackage[colorlinks, linkcolor=red, citecolor=green,filecolor=magenta,urlcolor=cyan]{hyperref}

\allowdisplaybreaks

\makeatletter
\newcommand{\autorefcheckize}[1]{%
  \expandafter\let\csname @@\string#1\endcsname#1%
  \expandafter\DeclareRobustCommand\csname relax\string#1\endcsname[1]{%
    \csname @@\string#1\endcsname{##1}\wrtusdrf{##1}}%
  \expandafter\let\expandafter#1\csname relax\string#1\endcsname
}
\makeatother

\allowdisplaybreaks
\newcommand{\norm}[1]{\left\lVert#1\right\rVert}
\newcommand{\abs}[1]{\left\lvert#1\right\rvert}
\newcommand{\set}[1]{\left\{#1\right\}}
\newcommand{\kh}[1]{\left(#1\right)}%
\newcommand{\dkh}[1]{\left\{#1\right\}}%
\newcommand{\jkh}[1]{\left\langle #1\right\rangle}

\theoremstyle{plain}
\newtheorem{theorem}{Theorem}[section]

\newaliascnt{lem}{theorem}
\newtheorem{lem}[lem]{Lemma}
 \aliascntresetthe{lem}

\newaliascnt{cor}{theorem}
\newtheorem{cor}[cor]{Corollary}
 \aliascntresetthe{cor}

\newaliascnt{prop}{theorem}
\newtheorem{prop}[prop]{Proposition}
 \aliascntresetthe{prop}

\theoremstyle{remark}

\theoremstyle{definition}

\numberwithin{equation}{section}

\newcommand{\BB}{\mathbf B}
\newcommand{\HH}{\mathbf H}

\begin{document}
\hypersetup{pdftitle={A topological sphere theorem for pinched Legendrian submanifolds},pdfauthor={Qing Cui and Linlin Sun}}

\begin{frontmatter}

\title{A topological sphere theorem for pinched Legendrian submanifolds}

\author[cui]{Qing Cui\fnref{fund}}
\ead{cuiqing@swjtu.edu.cn}
\address[cui]{School of Mathematics, Southwest Jiaotong University, 611756 Chengdu, Sichuan, China}

\author[sun]{Linlin Sun\corref{cor}\fnref{fund1}}
\ead{sunll@xtu.edu.cn}
\address[sun]{School of Mathematics and Computational Science, Xiangtan University, Xiangtan 411105, Hunan, China}

\cortext[cor]{Corresponding author}
\fntext[fund]{Supported by the Natural Science Foundation of Sichuan Province (Grant No. 2025ZNSFSC0077) and the National Natural Science Foundation of China (Grant No. 12371451).}
\fntext[fund1]{Supported by the National Natural Science Foundation of China (Grant Nos. 12571055 and 12671062), the Natural Science Foundation of Hunan Province (Grant No. 2026JJ20014), the 111 Project (Grant No. D23017), and the Program for Science and Technology Innovative Research Team in Higher Educational Institutions of Hunan Province, China.}

\begin{abstract}
Let $M^n$ be a closed simply connected Legendrian submanifold of $\mathbb S^{2n+1}$, where $n\geq5$, and let $\mathbf{B}$ and $\mathbf{H}=\operatorname{tr}\mathbf{B}$ denote its second fundamental form and mean-curvature vector. We prove that $M$ is homeomorphic to $\mathbb S^n$ if
\begin{equation*}
|\mathbf{B}|^2\leq \frac{(n-1)(n+2)}n+\frac{n^2+3n-2}{2n^2}|\mathbf{H}|^2-\frac{(n-1)(n-2)}{2n^2}|\mathbf{H}|\sqrt{|\mathbf{H}|^2+4n}.
\end{equation*}
The equality can be attained by the Calabi products, so simple connectivity cannot be omitted. For $n\geq4$, we also prove a sphere theorem under an explicit bound and the weaker assumption $b_1(M;\mathbb R)=0$. The proof combines phase-rotated Lawson--Simons variations with a systolic inequality for finite two-complexes. We also give local constructions showing that the pinching condition allows negative isotropic curvature at a point.
\end{abstract}

\begin{keyword}
Legendrian submanifolds \sep sphere theorem \sep Lawson--Simons inequality \sep integral currents

\par
\MSC[2020] 53C42\sep 53C25\sep 53C40
\end{keyword}

\end{frontmatter}

\section{Introduction}

Let $\phi:M^n\to\mathbb S^{2n+1}\subset\mathbb C^{n+1}$ be a Legendrian immersion. Following Luo--Sun \cite{LuoSun}, we denote its second fundamental form by $\BB$ and use the unaveraged mean-curvature vector $\HH=\operatorname{tr}\BB$. The Legendrian condition implies that
\begin{equation*}
\sigma(X,Y,Z)=\jkh{\BB(X,Y),JZ}
\end{equation*}
is a symmetric cubic tensor. We use this symmetry to obtain topological sphere theorems from pointwise bounds on $\abs{\BB}^2$ and $\abs{\HH}$.

Luo--Sun \cite[Theorem~1.1]{LuoSun} studied closed contact stationary Legendrian submanifolds under the pinching condition determined by
\begin{equation}\label{eq:delta}
\delta_n(h)=\frac{(n-1)(n+2)}n+\frac{n^2+3n-2}{2n^2}h^2-\frac{(n-1)(n-2)}{2n^2}h\sqrt{h^2+4n},\qquad h\geq0.
\end{equation}
For $n\geq3$, their theorem gives a rigidity alternative: the immersion is minimal or is a Calabi product of a totally geodesic Legendrian immersion and a point. In a recent preprint, Luo, Sun and Yin \cite[Theorem~1.2]{LuoSunYinSharp} show that a connected closed minimal Legendrian immersion of dimension $n\geq3$ with $\abs{\BB}^2\leq\delta_n(0)$ is, up to a unitary transformation, totally geodesic or a finite covering of the minimal Calabi image.

Our first result uses the same pinching function and assumes simple connectivity in place of contact stationarity.

\begin{theorem}\label{thm:luo-sun}
Let $\phi:M^n\to\mathbb S^{2n+1}$ be a Legendrian immersion of a closed simply connected manifold of dimension $n\geq5$. If $\abs{\BB}^2\leq\delta_n(\abs{\HH})$ everywhere on $M$, then $M$ is homeomorphic to $\mathbb S^n$.
\end{theorem}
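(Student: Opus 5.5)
The plan is the classical Lawson--Simons route, in the form used for sphere theorems (Lawson--Simons, Xin, Shiohama--Xu): show that there are no stable integral currents in degrees $1\le p\le n-1$, and then feed this into the Hurewicz and Poincaré arguments. The abstract points to two ingredients: a \emph{phase-rotated} family of test vector fields, and a systolic inequality for finite two-complexes.

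First I will set up the test fields and the second-variation estimate. On $\mathbb S^{2n+1}\subset\mathbb C^{n+1}$ I take the conformal fields $V_v=v-\langle v,x\rangle x$, and also the rotated fields $V_{e^{i\theta}v}$ for a fixed phase $\theta$. For a stable integral $p$-current $T$ in the homology class of $M$, I average the second variation $\sum_v \delta^2_{V_v}\mathbf M(T)$ over an orthonormal basis $v$ of $\mathbb C^{n+1}\cong\mathbb R^{2n+2}$. This gives the usual Lawson--Simons integrand
\[
\sum_{i\le p<j}\Bigl(2\abs{\BB(e_i,e_j)}^2-\langle \BB(e_i,e_i),\BB(e_j,e_j)\rangle\Bigr)-p(n-p)
\]
for the ambient sphere, together with additional terms coming from the rotation. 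The Legendrian condition lets me write every term through the symmetric cubic form $\sigma$ and the contact direction. I then choose $\theta$, pointwise or as an average over $\theta$, to remove the unfavourable terms that contain $J$.

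The core algebraic step is to show that the resulting quadratic form is negative for each $p$ with $2\le p\le n-2$ whenever $\abs{\BB}^2\le\delta_n(\abs{\HH})$. I will split $\sigma$ into its traceless part and its trace part, where the trace part is determined by $\HH$. After that I estimate the cross terms with a Cauchy--Schwarz/Okumura-type inequality. The radical $h\sqrt{h^2+4n}$ in $\delta_n$ should come from optimising a parameter in that estimate, and the worst case should be $p=1$ or $p=n-1$ with the Calabi-product configuration; this matches the equality case. I expect this step to be the main obstacle. The inequality is sharp, so I need an exact maximisation over symmetric cubic tensors with prescribed trace, not a lossy bound. My first attempt will be to reduce to a tensor whose only nonzero entries are $\sigma_{111}$ and $\sigma_{1jj}$, and then do a one-variable optimisation.

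The degrees $p=1$ and $p=n-1$ are not covered, since the Calabi products show that the pinching is critical there. I will handle them differently. Simple connectivity gives $\pi_1=0$, so $H_1=0$ and $H_{n-1}(M;\mathbb Z)=0$ by Poincaré duality and universal coefficients. For $p=2$ the nonexistence of stable currents kills $H_2(M;\mathbb Z)$. I will use the systolic inequality for finite two-complexes, if needed, to control mass-minimising representatives built from a 2-skeleton with $\pi_1=0$ and to exclude torsion. The vanishing of $H_p$ for $2\le p\le n-2$ then follows from Federer--Fleming: every nonzero integral homology class, or torsion class mod $q$, has a mass-minimising, hence stable, representative, which would contradict the previous step. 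So $M$ is a simply connected integral homology sphere, and Hurewicz and Whitehead make it a homotopy sphere. For $n\ge5$, the generalized Poincaré conjecture (Smale) gives that $M$ is homeomorphic to $\mathbb S^n$.
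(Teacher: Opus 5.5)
Your topological endgame is fine, but the analytic step has a genuine gap, and it sits exactly where the paper's key idea is. A rotation by a fixed phase does nothing, and neither does an average over fixed phases. The set $\{e^{\mathrm i\theta}a_A\}$ is again an orthonormal basis of $\mathbb R^{2n+2}$, and $\sum_A Q_\xi(a_A^\top)$ is the trace of a quadratic form in $a$, hence basis independent. Nor can $\theta$ be optimised pointwise: if $\theta$ is a function, its value at a point is irrelevant and only $\mathrm d\theta$ enters the second variation. So your fields are just the classical Lawson--Simons fields, with averaged integrand $\mathcal L_p^0(P)-p(n-p)$ in the paper's notation. This integrand is \emph{not} negative under $\abs{\BB}^2\le\delta_n(\abs{\HH})$. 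For example, take $n=5$ and let $\sigma_{111}=9$, $\sigma_{122}=3$, $\sigma_{133}=\sigma_{144}=\sigma_{155}=23/3$ (and permutations), with all other components zero. Then $\abs{\HH}=35$ and $\abs{\BB}^2=637<\delta_5(35)\approx640.2$, while for $P=\operatorname{span}\{e_1,e_2\}$ one finds $\mathcal L_2^0(P)=2\cdot3\cdot(23/3)^2-(9+3)\cdot23=230/3>6$. The argument needs strict negativity at every point and for every plane, so the ``core algebraic step'' you plan is false, not merely hard. No exact maximisation over cubic tensors can rescue it.

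The missing ingredient is a phase \emph{function} tied to the mean curvature.
\begin{itemize}
\item Codazzi and the symmetry of $\sigma$ make $\nabla\sigma$ totally symmetric, so $\mu=\jkh{\HH,J\,\cdot\,}$ is closed. Simple connectivity, through $b_1(M;\mathbb R)=0$, then gives $\mu=\mathrm d\vartheta$. This is the analytic use of the hypothesis, and your plan never makes it.
\item With $X_A=(e^{\mathrm i\eta}a_A)^\top$ and $\mathrm d\eta=v$, the averaged second variation becomes $\mathcal L_p^0(P)+q\abs{v_P}^2-2\sum_{i,\alpha}v_i\sigma_{i\alpha\alpha}-pq$.
\item The choice $v=t_p\mu$ cancels or reduces the cross term between the trace-free part and $\mu$ that drives the example. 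There $t_2=3/14$ turns $230/3$ into $-1195/12$.
\item The threshold produced this way is $F_n$, not $\delta_n$. The theorem then follows from the strict comparison $\delta_n<F_n$ for $n\ge5$.
\end{itemize}
That strict gap is also what yields strict negativity under the non-strict hypothesis, a point your plan does not address. Your expectation that the radical in $\delta_n$ comes out of the degree-$p$ optimisation, with Calabi products as extremals, is therefore misdirected. The Calabi products are extremal only for the topology, through $H_1$.

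Finally, the systolic inequality is unnecessary here. The paper uses it only to get $\pi_1=0$ from $b_1=0$ in Theorem~\ref{thm:sphere}. With $\pi_1(M)=0$ given, you finish exactly as you describe: Federer--Fleming in degrees $2\le p\le\lfloor n/2\rfloor$ (torsion classes included), then Poincar\'e duality, Hurewicz and Smale.
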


The pinching boundary is attained by the Calabi family in \cite[Appendix~A.2]{LuoSun}. With $\lambda=r_2/r_1\geq n^{-1/2}$ in their notation, its curvature norms satisfy
\begin{equation*}
\abs{\HH}=n\lambda-\lambda^{-1},\qquad \abs{\BB}^2=(3n-2)\lambda^2+\lambda^{-2}-2=\delta_n(\abs{\HH}).
\end{equation*}
Every constant $\abs{\HH}\geq0$ is attained on the complete domain $\mathbb R\times\mathbb S^{n-1}$. The immersion descends to a closed product $\mathbb S^1\times\mathbb S^{n-1}$ when $\lambda^2\in\mathbb Q$; these closed examples realize a dense set of mean-curvature norms, including zero. Thus simple connectivity cannot be omitted from Theorem~\ref{thm:luo-sun}, even at equality. This geometric attainment does not assert optimality of the bound within the simply connected class. Theorem~\ref{thm:luo-sun} gives a topological conclusion without classifying the immersion.

We prove Theorem~\ref{thm:luo-sun} by establishing a stronger pinching theorem with the assumption $b_1(M;\mathbb R)=0$. For $h\geq0$, define
\begin{equation}\label{eq:Fn-low}
\begin{aligned}
F_4(h)=&\frac92+\frac{21}{32}h^2-\frac3{32}h\sqrt{h^2+32},\\
F_5(h)=&\frac{63}{10}+\frac9{16}h^2-\frac3{80}h\sqrt{h^2+48},\\
F_6(h)=&\min\dkh{8+\frac12h^2,\,\frac{216}{25}+\frac{66}{125}h^2-\frac9{125}h\sqrt{h^2+60}}.
\end{aligned}
\end{equation}
For $n\geq7$, set
\begin{equation}\label{eq:Fn-explicit}
F_n(h)=
\begin{cases}
\begin{aligned}
&\displaystyle\frac{3(n^2-4)}{2n}+\frac{3(n+2)}{8n}h^2 -\frac{3(n-6)}{8(n-2)}h\sqrt{\frac{n-4}{n}h^2+\frac{8(n-2)^2}{n}},
\end{aligned}&0\leq h\leq\sqrt2(n-2),\\[4pt]
\displaystyle\frac{12(n-2)}n+\frac3n h^2,&h\geq\sqrt2(n-2).
\end{cases}
\end{equation}
The two branches agree at $h=\sqrt2(n-2)$. Each $F_n$ is continuous and locally Lipschitz on $[0,\infty)$.

\begin{theorem}\label{thm:sphere}
Let $\phi:M^n\to\mathbb S^{2n+1}$ be a Legendrian immersion of a closed connected manifold of dimension $n\geq4$. If $b_1(M;\mathbb R)=0$ and $\abs{\BB}^2<F_n(\abs{\HH})$ everywhere on $M$, then $M$ is homeomorphic to $\mathbb S^n$.
\end{theorem}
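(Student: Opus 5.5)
The strategy follows the Lawson--Simons approach, using integral currents and the Federer--Fleming existence theory. Suppose $M$ is not homeomorphic to $\mathbb S^n$. Since $n\geq4$ and $M$ is closed and connected, the generalized Poincaré conjecture (Smale, Freedman, Perelman) reduces the problem to homotopy spheres. So it is enough to show that $\pi_1(M)$ is trivial and that $H_p(M;\mathbb Z)=0$ for $1\leq p\leq n-1$. By Hurewicz and Poincaré duality it then suffices to rule out nontrivial integral homology in degrees $2\leq p\leq n-2$, to treat degree $1$ and degree $n-1$ separately, and to show $\pi_1(M)=1$.

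For each nonzero class in $H_p(M;\mathbb Z)$, or in the torsion part via integral currents mod torsion handled as in Lawson--Simons, with $2\leq p\leq n-2$, I take a mass-minimizing integral current $T$ representing it. I then deform $T$ by the flows of the tangential projections of conformal vector fields on $\mathbb S^{2n+1}$. The point of the paper's "phase-rotated" variations is this: instead of the real coordinate directions $e_A\in\mathbb R^{2n+2}$, I use $\mathrm{Re}(e^{i\theta}v)$ for $v\in\mathbb C^{n+1}$ and average over $\theta$ as well as over an orthonormal basis. The Legendrian condition makes $\sigma$ a totally symmetric cubic form, so the averaged second variation reduces to a pointwise quadratic expression on each $p$-plane $\xi\subset T_xM$. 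Roughly, it is
\begin{equation*}
\sum_{i\leq p<\alpha}\bigl(2\abs{\BB(e_i,e_\alpha)}^2-\jkh{\BB(e_i,e_i),\BB(e_\alpha,e_\alpha)}\bigr)-p(n-p)
\end{equation*}
plus correction terms coming from the $J$-directions and from $\theta$. Stability of $T$ forces this quantity to be nonnegative somewhere, so I need to show it is strictly negative everywhere under the pinching hypothesis.

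The algebraic core is an optimization lemma. For a symmetric cubic tensor $\sigma$ on $\mathbb R^n$ with $\abs{\sigma}^2=\abs{\BB}^2$ and trace vector of norm $h$, the maximum over $p$-planes of the averaged index form is less than $0$ whenever $\abs{\BB}^2<F_n(h)$. I would split $\sigma$ into its traceless part and its trace part $\frac{3}{n+2}\mathrm{Sym}(H^\flat\otimes g)$. I would bound the cross terms by Cauchy--Schwarz with a free parameter and then optimize that parameter. This optimization is the source of the square roots in $F_n$ and of the two branches for $n\geq7$. The worst case is $p=2$ or $p=n-2$, and the small-$n$ formulas come from checking each $p$ directly. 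I expect this sharp algebraic inequality to be the main obstacle. To get the right constants I would first treat the extremal configurations, in which $\sigma$ is concentrated on a Calabi-type splitting.

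Degrees $1$ and $n-1$ follow from $b_1=0$ and duality, leaving only torsion in $H_1=\pi_1^{ab}$. To show $\pi_1(M)$ is trivial, I would use the systolic inequality for finite $2$-complexes mentioned in the abstract. Suppose $\pi_1\neq1$. Since $H_1(M;\mathbb Q)=0$, the finite presentation gives a $2$-complex in which some nontrivial loop either bounds or has a torsion power that bounds. Choosing a systolic loop and filling it yields a stable minimizing $2$-dimensional object, a minimal surface with controlled boundary. The same averaged second variation in degree $2$ then gives a contradiction. Finally, a simply connected integral homology sphere of dimension $n\geq4$ is homeomorphic to $\mathbb S^n$.
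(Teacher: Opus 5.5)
There is a genuine gap. Your overall architecture matches the paper: Federer--Fleming minimizers, an averaged Lawson--Simons trace, a cubic-form estimate, and a systolic argument for $\pi_1$. However, two essential steps fail as you describe them.

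\textbf{The variation fields.} A constant phase does nothing. For constant $\theta$, multiplication by $e^{\mathrm i\theta}$ is an orthogonal map of $\mathbb C^{n+1}=\mathbb R^{2n+2}$. The map $a\mapsto Q_\xi(a^\top)$ is a quadratic form, and its sum over any orthonormal basis is its trace. Averaging over $\theta$ therefore returns exactly the classical trace $\mathcal L_p^0(P)-pq$, with no correction terms. This trace need not be negative under $\abs{\BB}^2<F_n(\abs{\HH})$. Take $n=5$, $P=\operatorname{span}\{e_1,e_2\}$, and $\sigma_{111}=-\tfrac{15}{16}$, $\sigma_{122}=-\tfrac5{16}$, $\sigma_{133}=\sigma_{144}=\sigma_{155}=\tfrac34$, with all other components zero up to symmetry. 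Then $\mu=e_1$, so $\abs{\HH}=1$ and $\abs{\BB}^2=\tfrac{399}{64}<\tfrac{33}{5}=F_5(1)$. Yet $\mathcal L_2^0(P)=6\sigma_{133}^2-3\sigma_{133}(\sigma_{111}+\sigma_{122})=\tfrac{99}{16}>6$. So the pointwise lemma you plan to prove is false for your fields, and no sharper estimate can rescue it.

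The missing idea is where $b_1(M;\mathbb R)=0$ really enters. Codazzi and the symmetry of $\sigma$ make the one-form $\mu(X)=\jkh{\HH,JX}$ closed, so $\mu=\mathrm d\vartheta$. One then uses fields $X_A=(e^{\mathrm i\eta}a_A)^\top$ with a \emph{position-dependent} phase $\eta=t_p\vartheta$. Differentiating the phase adds $q\abs{v_P}^2-2\sum_{i,\alpha}v_i\sigma_{i\alpha\alpha}$ with $v=t_p\mu$. With the right $t_p$, this cancels or reduces the cross term $\jkh{a_P,\mu_P}$ that the example exploits. With $t_2=\tfrac3{14}$, the trace above drops to $\tfrac{4203}{784}<6$. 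Only after this correction does the optimization produce $F_n$.

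\textbf{The argument for $\pi_1(M)=1$.} Your filling argument has no contradiction mechanism. An area-minimizing filling of a systolic loop, or of a power of it, is stable only under variations fixing its boundary. The flows of the $X_A$ move the boundary, and a surface with boundary losing area contradicts nothing. Homological fillings also cannot detect a nontrivial perfect $\pi_1$, which is compatible with $H_1(M;\mathbb Z)=0$.

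What is needed is a closed object carrying all of $\pi_1(M)$: the $2$-skeleton $K_0$ of a triangulation, for which $\pi_1(K_0)\cong\pi_1(M)$. Since $K_0$ is not minimal, use the paired flows $\varphi^{X_A}_{\pm\tau}$, whose first-order terms cancel. Uniform negativity in degree two then gives, at each step, one such diffeomorphism that shrinks area by a fixed factor $\rho<1$. Iterating gives diffeomorphisms $\psi_j$ with $\operatorname{Area}(\psi_j(K_0))\to0$, while $\operatorname{sys}_{\pi_1}(\psi_j(K_0))\geq\operatorname{sys}_{\pi_1}(M,g)>0$. The Rudyak--Sabourau inequality for two-complexes with nonfree fundamental group then forces $\pi_1(M)$ to be free, and $b_1=0$ makes it trivial.

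This step also yields orientability, which $b_1=0$ alone does not, and which you need before invoking Poincar\'e duality. The remaining degrees then follow from universal coefficients and duality.
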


We show in Lemma~\ref{lem:delta-comparison} that $\delta_n(h)<F_n(h)$ for $n\geq5$. This proves Theorem~\ref{thm:luo-sun}. The function $\delta_4$ is not everywhere below $F_4$, so this comparison does not give a four-dimensional version of Theorem~\ref{thm:luo-sun}. The strict inequality in Theorem~\ref{thm:sphere} is needed for our variation argument.

For comparison, Sun--Sun \cite[Theorem~5.4]{SunSun} proved a topological sphere theorem for closed simply connected Legendrian submanifolds under
\begin{equation*}
\abs{\BB}^2\leq
\begin{cases}
6+\frac34\abs{\HH}^2,&n=4,\\[2pt]
6+\frac9{3n+2}\abs{\HH}^2,&n\geq5,
\end{cases}
\end{equation*}
with strict inequality at one point. Here their condition has been written in the unit sphere with our convention for $\HH$. For $n\geq6$, Theorem~\ref{thm:sphere} allows a larger pointwise bound and requires only $b_1(M;\mathbb R)=0$, since
\begin{equation*}
\begin{aligned}
F_6(h)\geq&\frac{162}{25}+\frac{57}{125}h^2>6+\frac9{20}h^2,\\
F_n(h)\geq&\frac{12(n-2)}n+\frac3n h^2>6+\frac9{3n+2}h^2\qquad(n\geq7).
\end{aligned}
\end{equation*}
In dimension five the bounds $F_5$ and $6+9h^2/17$ are not globally ordered. The $n=5$ case of Theorem~\ref{thm:luo-sun} is already covered by the Sun--Sun bound, because
\begin{equation*}
\delta_5(h)\leq\frac{28}{5}+\frac{13}{25}h^2<6+\frac9{17}h^2.
\end{equation*}
For $n\geq6$, the corresponding $\delta_n$ bounds are not globally ordered:
\begin{equation*}
\delta_n(0)=n+1-\frac2n>6,\qquad \lim_{h\to\infty}\frac{\delta_n(h)}{h^2}=\frac{3n-2}{n^2}<\frac9{3n+2}.
\end{equation*}

The broader setting is the theory of curvature pinching and sphere theorems. The classical sectional-curvature results of Rauch, Berger and Klingenberg \cite{Rauch,Berger,Klingenberg} were followed by differentiable pinching theorems \cite{BrendleSchoen08,BrendleSchoen09,XuGu}. Isotropic curvature and Ricci flow provide other approaches to topology and classification \cite{MicallefMoore,ChenTangZhu,BohmWilking,Brendle}. For extrinsic sphere theorems and related topological or geometric rigidity under curvature bounds, see \cite{Leung93,ShiohamaXu,CuiSun,LiXuZhao,GeTaoZhou,Vlachos}; the mean-curvature-flow approach is developed in \cite{AndrewsBaker,BakerNguyen}.

Rigidity for minimal submanifolds in spheres has its origins in the work of Simons and Chern--do Carmo--Kobayashi \cite{Simons,ChernDoCarmoKobayashi}, with further pinching results in \cite{Leung83,LiLi,ChenXu}. Related results for totally umbilic hypersurfaces and submanifolds with constant or parallel mean curvature appear in \cite{ChengNakagawa,XuRen,Xu}. Within the Legendrian class, rigidity results exploiting the symmetric cubic form include \cite{YamaguchiKonMiyahara,LuoSunYin,WuYang}. For Hamiltonian-minimal constructions, curvature inequalities and sphere theorems in Lagrangian geometry, respectively, see \cite{CastroLiUrbano,ChenPrietoWang,LiWang}. Calibrated geometry and special Lagrangian cones provide further context for the cone construction used in Section~\ref{sec:examples} \cite{HarveyLawson,Haskins}.

Integral-current vanishing arguments and stable-current methods for submanifold topology are discussed in \cite{LawsonSimons,Xin,HowardWei}. The relation between contraction by flows and topological vanishing also appears in the stochastic-flow approach of Elworthy--Rosenberg \cite{ElworthyRosenberg}; general background on stochastic flows is given in \cite{Kunita}. For the distinction between topological and smooth classifications of homotopy spheres, see \cite{KervaireMilnor}. For topology and Ricci flow in dimension three, see \cite{Moise,PerelmanSurgery,PerelmanExtinction}.

The proof uses the Lawson--Simons method \cite{LawsonSimons}. Since $b_1(M;\mathbb R)=0$, the mean-curvature form has a global potential. We use this potential to rotate the ambient constant vector fields and estimate the resulting variation traces by the symmetry of $\sigma$. These estimates give vanishing of the required integral homology groups. In degree two, paired forward and backward flows contract the area of a finite two-skeleton. The systolic inequality of Rudyak--Sabourau \cite{RudyakSabourau} then forces $\pi_1(M)$ to be free. Since $b_1(M;\mathbb R)=0$, the manifold is simply connected.

The horizontal Hopf lift preserves the second fundamental form of a simply connected Lagrangian immersion \cite{Reckziegel}. We therefore obtain the following consequence, where $\mathbb{CP}^n(4)$ has constant holomorphic sectional curvature four.

\begin{cor}\label{cor:lagrangian}
Let $\phi:M^n\to\mathbb{CP}^n(4)$ be a Lagrangian immersion of a closed simply connected manifold of dimension $n\geq4$. If $\abs{\BB}^2<F_n(\abs{\HH})$ everywhere, or if $n\geq5$ and $\abs{\BB}^2\leq\delta_n(\abs{\HH})$ everywhere, then $M$ is homeomorphic to $\mathbb S^n$.
\end{cor}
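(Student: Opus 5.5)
The plan is to reduce Corollary~\ref{cor:lagrangian} to Theorems~\ref{thm:sphere} and~\ref{thm:luo-sun} through the horizontal Hopf lift. Let $\pi:\mathbb S^{2n+1}\to\mathbb{CP}^n(4)$ be the Hopf fibration; it is a Riemannian submersion with totally geodesic fibres $\mathbb S^1$. The horizontal distribution is the contact distribution $\ker\eta$, where $\eta$ is the standard contact form.

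\emph{Step 1: construct the lift.} Since $M$ is simply connected and $\phi$ is Lagrangian, the obstruction to a horizontal lift vanishes. Concretely, pull back the circle bundle $\pi$ along $\phi$ to get a principal $U(1)$-bundle over $M$. The connection induced by $\eta$ has curvature proportional to $\phi^*\omega$, which vanishes because $\phi$ is Lagrangian. The pullback bundle is therefore flat, and a flat bundle over a simply connected manifold has trivial holonomy. A parallel section then gives a map $\tilde\phi:M\to\mathbb S^{2n+1}$ with $\pi\circ\tilde\phi=\phi$ and $\tilde\phi^*\eta=0$. Thus $\tilde\phi$ is Legendrian on the same manifold $M$, and it is an immersion because $d\pi$ is an isometry on horizontal vectors.

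\emph{Step 2: compare second fundamental forms.} By \cite{Reckziegel}, $d\pi$ identifies the normal bundle of $\tilde\phi$ modulo the Reeb direction with the normal bundle of $\phi$, and it carries $\tilde\BB$ to $\BB$. The Reeb component $\jkh{\tilde\BB(X,Y),\xi}$ vanishes, because the contact form restricts to zero on $M$ and the Reeb field is Killing. Since $\pi$ is a Riemannian submersion, the induced metrics agree. Hence $\abs{\tilde\BB}^2=\abs{\BB}^2$ and $\abs{\tilde\HH}=\abs{\HH}$ pointwise, using the paper's unnormalized convention $\HH=\operatorname{tr}\BB$ on both sides.

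\emph{Step 3: apply the theorems.} $M$ is closed, and simple connectivity gives connectedness and $b_1(M;\mathbb R)=0$. If $\abs{\BB}^2<F_n(\abs{\HH})$ with $n\ge4$, Theorem~\ref{thm:sphere} applies to $\tilde\phi$. If $n\ge5$ and $\abs{\BB}^2\le\delta_n(\abs{\HH})$, Theorem~\ref{thm:luo-sun} applies to $\tilde\phi$. In either case $M$ is homeomorphic to $\mathbb S^n$.

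The main obstacle is Step 2. One must check that the O'Neill tensor term contributes only in the vertical direction $\xi$, and that this component vanishes for horizontal Legendrian immersions. Otherwise $\abs{\tilde\BB}^2$ would pick up an extra term and the pinching would not transfer. I would verify this by differentiating $\jkh{\tilde\phi,\, i\,\tilde\phi}=0$ and $\jkh{d\tilde\phi(X),\, i\,\tilde\phi}=0$ in $\mathbb C^{n+1}$, which shows $\jkh{\tilde\BB(X,Y),i\tilde\phi}=0$.
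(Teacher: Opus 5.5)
Your proposal is correct and follows essentially the same route as the paper. The paper's argument is: the pulled-back Hopf connection is flat because its curvature is a multiple of $\phi^*\omega=0$, simple connectivity gives a global horizontal Legendrian lift, the lifted second fundamental form has no vertical component and its horizontal part projects to that of $\phi$, and then Theorems~\ref{thm:sphere} and~\ref{thm:luo-sun} apply. Your Killing-field argument (symmetric in $X,Y$ versus antisymmetric) is a valid way of phrasing the paper's one-line computation $\jkh{\BB^F(X,Y),JF}=-\jkh{\mathrm dF(Y),J\,\mathrm dF(X)}=0$.
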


Section~\ref{sec:variation} establishes the topological criterion. Section~\ref{sec:legendrian-proof} proves the variation estimates and the sphere theorems. In Section~\ref{sec:examples}, we construct strictly $F_n$-pinched Legendrian spheres with negative isotropic curvature on a four-frame at one point. Thus our pinching condition does not imply positive isotropic curvature.

\section{A variation criterion for the sphere}\label{sec:variation}

Let $(M^n,g)$ be a closed connected Riemannian manifold. We use the curvature convention
\begin{equation*}
R(X,Y)Z=\nabla_X\nabla_YZ-\nabla_Y\nabla_XZ-\nabla_{[X,Y]}Z,\qquad R_{ijkl}=\jkh{R(e_i,e_j)e_l,e_k}.
\end{equation*}
Thus the unit sphere has $R_{ijkl}=\delta_{ik}\delta_{jl}-\delta_{il}\delta_{jk}$. We use the notation of Lawson--Simons \cite[Sections~1--3]{LawsonSimons}. Let $\varphi_t^V$ be the flow of a smooth tangent field $V$. For orthonormal vectors $e_1,\ldots,e_p\in T_xM$, put $\xi=e_1\wedge\cdots\wedge e_p$ and set
\begin{equation*}
Q_\xi(V)=\left.\frac{\mathrm d^2}{\mathrm dt^2}\right|_{t=0}\norm{\varphi_{t*}^V\xi}.
\end{equation*}
Here $\varphi_{t*}^V$ denotes the induced map on exterior powers. Since $Q_{-\xi}(V)=Q_\xi(V)$, this expression depends only on the unoriented plane $P$ represented by $\xi$. Following \cite[Section~2, Remark~4]{LawsonSimons}, write $G'_{p,q}(M)$ for the bundle of such planes, where $q=n-p$. Complete $e_1,\ldots,e_p$ to an orthonormal frame by $e_{p+1},\ldots,e_n$. Latin indices range from $1$ to $p$ and Greek indices from $p+1$ to $n$. The pointwise form of the second variation formula in \cite[Section~2, Theorem~1 and Remark~2]{LawsonSimons} is
\begin{equation}\label{eq:variation-pointwise}
Q_\xi(V)=\sum_i\jkh{\nabla_{e_i}\nabla_VV,e_i}+\sum_i\jkh{R(V,e_i)V,e_i}+\kh{\sum_i\jkh{\nabla_{e_i}V,e_i}}^2-\sum_{i,j}\jkh{\nabla_{e_i}V,e_j}\jkh{\nabla_{e_j}V,e_i}+\sum_{i,\alpha}\jkh{\nabla_{e_i}V,e_\alpha}^2.
\end{equation}
For a rectifiable $p$-current $\mathscr S$, let $\mathbf M(\mathscr S)$, $\norm{\mathscr S}$ and $\vec{\mathscr S}$ denote its mass, mass measure and unit oriented tangent $p$-vector, respectively. The area formula gives
\begin{equation*}
Q_{\mathscr S}(V):=\left.\frac{\mathrm d^2}{\mathrm dt^2}\right|_{t=0}\mathbf M\kh{\varphi_{t*}^V\mathscr S}=\int_M Q_{\vec{\mathscr S}(x)}(V)\,\mathrm d\norm{\mathscr S}(x).
\end{equation*}
We also apply \eqref{eq:variation-pointwise} to the faces of finite two-complexes.

\begin{prop}\label{prop:LS-sphere}
Suppose that $n\geq4$ and $b_1(M;\mathbb R)=0$. For each $2\leq p\leq\lfloor n/2\rfloor$, assume that there are finitely many smooth tangent fields $V_1,\ldots,V_m$ such that
\begin{equation}\label{eq:negative-variation}
\sum_{A=1}^m Q_\xi(V_A)<0
\end{equation}
for every $P\in G'_{p,n-p}(M)$, where $\xi$ is either unit simple vector representing $P$. The family may depend on $p$. Then $M$ is homeomorphic to $\mathbb S^n$.
\end{prop}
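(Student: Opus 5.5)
The plan is to show that $H_p(M;\mathbb Z)=0$ for $1\le p\le n-1$ and that $\pi_1(M)=1$, and then to invoke the generalized Poincar\'e conjecture. Here $n\ge4$, so the topological conclusion follows from Freedman when $n=4$ and from Smale/Newman when $n\ge5$.

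\emph{Step 1: Lawson--Simons vanishing for $2\le p\le\lfloor n/2\rfloor$.} Fix $p$ in this range and a nonzero class in $H_p(M;\mathbb Z)$. By Federer--Fleming compactness there is a mass-minimizing integral current $\mathscr S$ in the class. For every $A$ and small $t$, the current $\varphi^{V_A}_{t*}\mathscr S$ is homologous to $\mathscr S$, so $Q_{\mathscr S}(V_A)\ge0$. On the other hand, integrating \eqref{eq:negative-variation} against $\norm{\mathscr S}$ with the area formula gives
\[
\sum_A Q_{\mathscr S}(V_A)=\int_M\sum_A Q_{\vec{\mathscr S}}(V_A)\,\mathrm d\norm{\mathscr S}<0
\]
unless $\mathscr S=0$. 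This contradiction forces $H_p(M;\mathbb Z)=0$ for $2\le p\le\lfloor n/2\rfloor$. Since $Q_\xi$ only depends on the unoriented plane, torsion classes are handled the same way, with minimizers mod torsion or flat chains as in Lawson--Simons. The same argument applied to the complementary planes, or Poincar\'e duality combined with the universal coefficient theorem (using that $H_1$ is torsion once $b_1=0$), then gives $H_p=0$ for $n-\lfloor n/2\rfloor\le p\le n-2$. Degree $n-1$ follows from $H^1$ and torsion of $H_{n-2}$.

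\emph{Step 2: $\pi_1(M)$ is free, hence trivial.} This is the step I expect to be the main obstacle, because minimizing currents only detect homology, not $\pi_1$. I would triangulate $M$ and take the finite two-skeleton $K\subset M$, so that $\pi_1(K)=\pi_1(M)$. Applying \eqref{eq:negative-variation} with $p=2$, for each face some flow $\varphi^{\pm V_A}_t$ strictly decreases the area to second order. Using paired forward and backward flows, whose first variations cancel, together with compactness of $G'_{2,n-2}(M)$, I get a uniform factor $\theta<1$: some deformation $K\mapsto K'$ homotopic to the inclusion satisfies $\operatorname{Area}(K')\le\theta\operatorname{Area}(K)$. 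The induced map is an isomorphism on $\pi_1$, and the lengths of noncontractible loops in $M$ are bounded below by the systole of $M$. Iterating the deformation makes the area of a two-complex carrying all of $\pi_1(M)$ arbitrarily small, while its systole, pulled back through maps that do not shrink loop lengths below $\operatorname{sys}(M)$, stays bounded below.

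The Rudyak--Sabourau systolic inequality states that a finite two-complex with non-free fundamental group satisfies $\operatorname{Area}\ge c\,\operatorname{sys}^2$. The delicate point is to formulate this so that it applies to the pushed-forward complexes; I would apply it to the pullback metric on $K$ itself, in which $\pi_1$-systole is at least $\operatorname{sys}(M)$. With that in place, the area decay contradicts the inequality unless $\pi_1(M)$ is free. A free group with $b_1=0$ is trivial.

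\emph{Step 3: conclusion.} Now $M$ is a simply connected closed manifold with $H_p(M;\mathbb Z)=0$ for $0<p<n$. Hurewicz and Whitehead make it a homotopy sphere, and the Poincar\'e conjecture in dimension $n\ge4$ yields $M\cong\mathbb S^n$ homeomorphically. Simple connectivity also retroactively clears the torsion subtleties in Step 1.
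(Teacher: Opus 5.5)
Your proposal is correct and follows the paper's proof essentially step for step. Federer--Fleming minimizers with stability kill $H_p(M;\mathbb Z)$ for $2\le p\le\lfloor n/2\rfloor$, and torsion classes are covered directly, with no mod-torsion variant needed; paired forward/backward flows shrink the two-skeleton's area geometrically, so the Rudyak--Sabourau inequality forces $\pi_1(M)$ to be free and hence trivial; and duality, Hurewicz, Whitehead and Freedman/Smale finish.

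The one fix is ordering. The duality step needs orientability and $H_1(M;\mathbb Z)=0$: with $H_1$ only torsion, $H_{n-2}\cong\operatorname{Ext}(H_1,\mathbb Z)$ can be nonzero. The ``complementary planes'' alternative is unavailable, since the hypothesis only covers $p\le\lfloor n/2\rfloor$. So that step must come after the $\pi_1$ argument, exactly as in the paper.
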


\begin{proof}
Fix $p$ in the stated range. If $H_p(M;\mathbb Z)\ne0$, the Federer--Fleming theorem \cite{FedererFleming} gives a nonzero mass-minimizing integral cycle $\mathscr S$. Each flow is isotopic to the identity, so the area formula and stability give
\begin{equation*}
0\leq\sum_A Q_{\mathscr S}(V_A)=\int_M\sum_A Q_{\vec{\mathscr S}(x)}(V_A)\,\mathrm d\norm{\mathscr S}(x)<0.
\end{equation*}
Consequently,
\begin{equation}\label{eq:homology-vanishing}
H_p(M;\mathbb Z)=0\qquad\kh{2\leq p\leq\lfloor n/2\rfloor}.
\end{equation}

We next show that $\pi_1(M)$ is free. Apply \eqref{eq:negative-variation} with $p=2$. Compactness of $G'_{2,n-2}(M)$ gives a constant $\kappa>0$ such that $\sum_A Q_\xi(V_A)\leq-\kappa$. Taylor's formula yields, uniformly over unit simple two-vectors $\xi$,
\begin{equation*}
\sum_A\kh{\norm{\varphi_{t*}^{V_A}\xi}+\norm{\varphi_{-t*}^{V_A}\xi}}=2m+t^2\sum_A Q_\xi(V_A)+O(t^4).
\end{equation*}
Choose $\tau>0$ so that the remainder is at most $\kappa\tau^2/2$ and $\rho=1-\kappa\tau^2/(4m)\in(0,1)$. For every embedded finite piecewise smooth two-complex $K\subset M$, integration over its faces gives
\begin{equation*}
\sum_{A=1}^m\sum_{\varepsilon=\pm1}\operatorname{Area}\kh{\varphi_{\varepsilon\tau}^{V_A}(K)}\leq\kh{2m-\frac{\kappa\tau^2}{2}}\operatorname{Area}(K).
\end{equation*}
One of these diffeomorphisms decreases the area by a factor at most $\rho$.

Let $K_0$ be the two-skeleton of a finite smooth triangulation of $M$. The inclusion induces an isomorphism $\pi_1(K_0)\cong\pi_1(M)$. Iterating the preceding choice gives diffeomorphisms $\psi_j$, isotopic to the identity, such that
\begin{equation}\label{eq:area-decay}
\operatorname{Area}\kh{\psi_j(K_0)}\leq\rho^j\operatorname{Area}(K_0)\longrightarrow0.
\end{equation}
Suppose that $\pi_1(M)$ is not free. Every noncontractible loop in $\psi_j(K_0)$ is noncontractible in $M$, and its length is unchanged by viewing it as a curve in $M$. Thus
\begin{equation*}
\operatorname{sys}_{\pi_1}\kh{\psi_j(K_0)}\geq\operatorname{sys}_{\pi_1}(M,g)>0,
\end{equation*}
where $\operatorname{sys}_{\pi_1}$ is the infimum of the lengths of noncontractible loops. By the systolic inequality for finite piecewise Riemannian two-complexes with nonfree fundamental group \cite[Theorem~8.1]{RudyakSabourau},
\begin{equation*}
\operatorname{Area}\kh{\psi_j(K_0)}\geq\frac{\pi}{16}\operatorname{sys}_{\pi_1}\kh{\psi_j(K_0)}^2\geq\frac{\pi}{16}\operatorname{sys}_{\pi_1}(M,g)^2>0.
\end{equation*}
This contradicts \eqref{eq:area-decay}. Hence $\pi_1(M)$ is free. Its rank equals $b_1(M;\mathbb R)=0$, so $M$ is simply connected and orientable.

Combining \eqref{eq:homology-vanishing} with the integral universal coefficient theorem and Poincar\'e duality gives
\begin{equation*}
H_j(M;\mathbb Z)=0\quad(1\leq j<n),\qquad H_n(M;\mathbb Z)\cong\mathbb Z.
\end{equation*}
The Hurewicz theorem, applied successively, yields $\pi_j(M)=0$ for $1\leq j<n$ and $\pi_n(M)\cong\mathbb Z$. A map $\mathbb S^n\to M$ representing a generator is a homology equivalence and hence a homotopy equivalence \cite{Hatcher}. Freedman's theorem when $n=4$ and Smale's theorem when $n\geq5$ complete the proof \cite{Freedman,Smale}.
\end{proof}

\section{Sphere theorems}\label{sec:legendrian-proof}

\subsection{Rotated variation fields}

Let $\phi:M^n\to\mathbb S^{2n+1}\subset\mathbb C^{n+1}$ be Legendrian for the contact form $\bar\alpha_z(V)=\frac12\jkh{Jz,V}$, as in \cite[Section~2]{LuoSun}. We identify tangent vectors with their images under $\mathrm d\phi$. The normal bundle in the sphere is $JTM\oplus\mathbb R J\phi$, and $\jkh{\BB(X,Y),J\phi}=0$. Following \cite[Section~2]{LuoSun}, write in a local orthonormal frame
\begin{equation*}
\sigma_{ijk}=\jkh{\BB(e_i,e_j),Je_k},\qquad \mu(X)=\jkh{\HH,JX},\qquad \mu_k=\jkh{\HH,Je_k}=\sum_j\sigma_{jjk}.
\end{equation*}
Then $\sigma$ is symmetric, $\abs{\sigma}=\abs{\BB}$, and $\abs{\mu}=\abs{\HH}$. The normal connection satisfies
\begin{equation*}
\nabla_X^\perp(JY)=J\nabla_XY-\jkh{X,Y}J\phi,
\end{equation*}
and hence
\begin{equation*}
(\nabla_X\sigma)(Y,Z,W)=\jkh{(\nabla_X^\perp\BB)(Y,Z),JW}.
\end{equation*}
Codazzi and the symmetry of $\sigma$ imply that $\nabla\sigma$ is symmetric. Taking a trace gives $\mathrm d\mu=0$. In particular, if $b_1(M;\mathbb R)=0$, then $\mu=\mathrm d\vartheta$ for a smooth function $\vartheta$.

Fix a $p$-plane $P$, choose a unit simple $p$-vector $\xi$ representing it, and put $q=n-p$, $Q=P^\perp$. Latin indices range over $P$, Greek indices over $Q$, and $l$ over the full tangent space. For a symmetric cubic tensor $T$, set
\begin{equation*}
\mathcal L_p^0(T;P)=2\sum_{i,\alpha,l}T_{i\alpha l}^2-\sum_l\kh{\sum_iT_{iil}}\kh{\sum_\alpha T_{\alpha\alpha l}}.
\end{equation*}
This expression satisfies
\begin{equation}\label{eq:classical-complement}
\mathcal L_p^0(T;P)=\mathcal L_{n-p}^0(T;P^\perp).
\end{equation}
For the cubic form $\sigma$, abbreviate $\mathcal L_p^0(\sigma;P)$ to $\mathcal L_p^0(P)$ and define, for a one-form $v$,
\begin{equation}\label{eq:rotated-trace}
\mathcal L_p^v(P)=\mathcal L_p^0(P)+q\abs{v_P}^2-2\sum_{i,\alpha}v_i\sigma_{i\alpha\alpha}.
\end{equation}
Here $v_P$ denotes the restriction of $v$ to $P$.

\begin{lem}\label{lem:rotated-fields}
Let $a_1,\ldots,a_{2n+2}$ be a real orthonormal basis of $\mathbb C^{n+1}$, and let $\eta$ be a smooth real function on $M$. The tangent fields
\begin{equation}\label{eq:rotation-fields}
X_A=\kh{e^{\mathrm i\eta}a_A}^{\top}\qquad(1\leq A\leq2n+2)
\end{equation}
satisfy
\begin{equation}\label{eq:rotation-drift}
\sum_A Q_\xi(X_A)=\mathcal L_p^{\mathrm d\eta}(P)-pq
\end{equation}
for every unit simple $p$-vector $\xi$ representing a tangent $p$-plane $P$.
\end{lem}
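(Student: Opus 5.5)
We compute $\nabla X_A$ explicitly and then sum over $A$ in the Lawson--Simons formula \eqref{eq:variation-pointwise}, using $\sum_A\jkh{u,a_A}\jkh{w,a_A}=\jkh{u,w}$ for $u,w\in\mathbb C^{n+1}$.

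\textbf{Step 1: the normal part and the covariant derivative.} Write $c=\cos\eta$, $s=\sin\eta$, so $e^{\mathrm i\eta}a=ca+sJa$. Put $W=e^{\mathrm i\eta}a$, a $\mathbb C^{n+1}$-valued function on $M$ with $D_YW=\mathrm d\eta(Y)\,JW$. Decompose
\begin{equation*}
W=X+\sum_k f_kJe_k+g\,J\phi+h\,\phi,\qquad X=W^\top .
\end{equation*}
Differentiate $W=X+(\text{normal})$ along $Y=e_i$, using the Gauss and Weingarten formulas in $\mathbb S^{2n+1}\subset\mathbb C^{n+1}$, the relation $\nabla^\perp_X(JY)=J\nabla_XY-\jkh{X,Y}J\phi$, and $\jkh{\BB(Y,Z),Je_k}=\sigma(Y,Z,e_k)$. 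Taking tangential components should give
\begin{equation*}
\jkh{\nabla_{e_i}X,e_j}=h\,\delta_{ij}+\sum_k f_k\sigma_{ijk}+g\,\delta_{ij}\ (\text{from }D(J\phi)=J\,\mathrm d\phi)-\mathrm d\eta(e_i)\,\jkh{JW,e_j}.
\end{equation*}
Since $\jkh{JW,e_j}=-\jkh{W,Je_j}=-f_j$, the last term is $+\eta_i f_j$. I will double-check the sign conventions, in particular whether the $g$ term belongs to the symmetric part or to a skew part; in the Legendrian case the $J\phi$ direction should contribute a skew term $\pm g\,\omega$, which vanishes because $\omega(e_i,e_j)=\jkh{Je_i,e_j}=0$. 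The result is
\begin{equation*}
\nabla_{e_i}X=h\,e_i+\sum_{j,k}f_k\sigma_{ijk}e_j+\eta_i\sum_jf_je_j .
\end{equation*}
This is the unrotated Lawson--Simons formula plus a rank-one drift $\mathrm d\eta\otimes f$.

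\textbf{Step 2: the zeroth-order terms.} We have $\sum_A\abs{X_A}^2$-type sums, and in general $\sum_A\jkh{X_A,u}\jkh{X_A,w}=\jkh{u,w}$ for tangent $u,w$. The curvature term is then $\sum_A\sum_i\jkh{R(X_A,e_i)X_A,e_i}$. I will evaluate it via the Gauss equation, which gives $R_{ijkl}=\delta\delta-\delta\delta+\sum_m(\sigma_{ikm}\sigma_{jlm}-\sigma_{ilm}\sigma_{jkm})$, equivalently $\mathrm{Ric}=(n-1)g+\sum\mu_m\sigma_{\cdot\cdot m}-\sum\sigma\sigma$ restricted to $P$.

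The term $\sum_i\jkh{\nabla_{e_i}\nabla_XX,e_i}$ is the hard part. Rather than computing it directly, I plan to use $\sum_A$ to rewrite $\sum_A\nabla_{X_A}X_A$ as a global vector field $Z$ and then handle $\mathrm{div}_P Z$. From Step 1,
\begin{equation*}
\sum_A\nabla_{X_A}X_A=\sum_A\Big(hX+\sum f_k\sigma(X,\cdot,e_k)+\mathrm d\eta(X)f\Big).
\end{equation*}
Summing with the orthogonality relations ($\sum hX=0$, $\sum f_kX_l=0$, since these are mixed components of an orthogonal projection) gives $Z=0$. This is exactly what makes the rotated fields convenient: the first term drops out.

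\textbf{Step 3: the first-order terms.} Here we use $\sum_A h^2=1$, $\sum_A f_kf_l=\delta_{kl}$, $\sum_A hf_k=0$, and $\sum_A g f_k=0$. The squared terms become:
\begin{equation*}
\sum_A(\operatorname{tr}_P\nabla X_A)^2=p^2+\sum_k\Big(\sum_i\sigma_{iik}+\eta_k^{P}\text{-part}\Big)^2 .
\end{equation*}
More precisely, $\sum_i\jkh{\nabla_iX,e_i}=ph+\sum_kf_k(\sum_i\sigma_{iik}+\eta_k\chi_P(k))$, so after summing over $A$ it is $p^2+\sum_k(\sum_i\sigma_{iik}+\delta_{k\in P}\eta_k)^2$. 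The terms $\sum_{i,j}\jkh{\nabla_iX,e_j}\jkh{\nabla_jX,e_i}$ and $\sum_{i,\alpha}\jkh{\nabla_iX,e_\alpha}^2$ expand the same way. The $h^2$ parts give $p^2-p$ and $0$. The $\sigma\sigma$ parts, combined with the Gauss-equation curvature term, should reproduce $\mathcal L_p^0(P)-pq$, exactly as in the unrotated Legendrian computation. The $\eta$-cross terms and $\eta^2$ terms must collapse to $q\abs{v_P}^2-2\sum v_i\sigma_{i\alpha\alpha}$.

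\textbf{Main obstacle.} The hardest step is this cancellation in the drift terms. For example, the $\sum_{i,\alpha}$ term contributes $\sum_{i,\alpha}\eta_i^2=q\abs{v_P}^2$, and the cross terms must combine with the trace terms into $-2\sum v_i\sigma_{i\alpha\alpha}$, using $\mu_k=\sum_l\sigma_{llk}$. I expect the apparent $\mu$-dependence to cancel only after using the curvature term's $\mu\sigma$ contribution. I will organize the computation by collecting coefficients of $\eta_i\sigma$ and $\eta\eta$ separately, and I will verify the result on a totally geodesic example with linear $\eta$ as a sanity check.
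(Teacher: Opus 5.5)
Your overall route is the paper's: differentiate $X_A=(e^{\mathrm i\eta}a_A)^\top$, and use that $\{e^{\mathrm i\eta}a_A\}$ is an orthonormal basis at each point, so that $\sum_A\nabla_{X_A}X_A=0$ and $\sum_AX_A\otimes X_A=\mathrm{id}_{TM}$. Then finish with the Gauss equation. However, Step~1 has a sign error that changes the result. With $v=\mathrm d\eta$, $D_{e_i}W=v_iJW$ and $D_{e_i}e_j=\nabla_{e_i}e_j+\BB(e_i,e_j)-\delta_{ij}\phi$, one gets
\begin{equation*}
\langle\nabla_{e_i}X,e_j\rangle=\langle D_{e_i}W,e_j\rangle+\langle W,\BB(e_i,e_j)\rangle-\delta_{ij}\langle W,\phi\rangle=\sum_l\kh{\sigma_{ijl}-v_i\delta_{jl}}f_l-\delta_{ij}h .
\end{equation*}
The term $v_i\langle JW,e_j\rangle=-v_if_j$ enters with a plus sign, so the drift is $-v_if_j$, not $+v_if_j$. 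Your $+h\delta_{ij}$ also has the wrong sign, but that one is harmless: $\sum_Ahf_l=0$, so only $h^2$ survives.

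The drift sign is not harmless. With the correct drift, the transverse term sums to $\sum_{i,\alpha,l}\sigma_{i\alpha l}^2-2\sum_{i,\alpha}v_i\sigma_{i\alpha\alpha}+q\abs{v_P}^2$. With yours, the cross term becomes $+2\sum_{i,\alpha}v_i\sigma_{i\alpha\alpha}$, so you would prove the identity with $\mathcal L_p^{-\mathrm d\eta}(P)$ in place of $\mathcal L_p^{\mathrm d\eta}(P)$. Your proposed sanity check cannot catch this: on a totally geodesic example $\sigma\equiv0$, which kills the only sign-sensitive term.

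Your description of the ``main obstacle'' is also misdirected. The curvature term uses only $\sum_AX_A\otimes X_A=\mathrm{id}_{TM}$, so it is independent of $\eta$ and cannot cancel any drift contribution. Its $\mu\sigma$ part instead combines with the $\eta$-independent squared-trace term to produce $-\sum_l(\sum_i\sigma_{iil})(\sum_\alpha\sigma_{\alpha\alpha l})$ in $\mathcal L_p^0$.

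The drift changes in the two tangential terms cancel each other identically, whatever the sign of the drift. The squared trace contributes $-2\sum_iv_i\sum_j\sigma_{jji}+\abs{v_P}^2$, and $-\sum_{i,j}\langle\nabla_{e_i}X,e_j\rangle\langle\nabla_{e_j}X,e_i\rangle$ contributes exactly its negative. The whole correction $q\abs{v_P}^2-2\sum_{i,\alpha}v_i\sigma_{i\alpha\alpha}$ therefore comes from the transverse term alone. Once the drift sign is fixed, your computation coincides with the paper's.
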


\begin{proof}
Put $v=\mathrm d\eta$. Differentiating the tangential projection gives
\begin{equation}\label{eq:rotation-derivative}
\jkh{\nabla_{e_i}X_A,e_j}=\sum_l\kh{\sigma_{ijl}-v_i\delta_{jl}}\jkh{e^{\mathrm i\eta}a_A,Je_l}-\delta_{ij}\jkh{e^{\mathrm i\eta}a_A,\phi}.
\end{equation}
Since $TM$, $JTM$, and $\operatorname{span}_{\mathbb R}\set{\phi,J\phi}$ are mutually orthogonal,
\begin{equation}\label{eq:rotation-covariance}
\sum_A\jkh{X_A,X}\jkh{X_A,Y}=\jkh{X,Y},\qquad \sum_A\nabla_{X_A}X_A=0.
\end{equation}
Thus the acceleration terms in \eqref{eq:variation-pointwise} sum to zero, and the curvature terms sum to $-\sum_i\operatorname{Ric}(e_i,e_i)$.

First set $v=0$ in \eqref{eq:rotation-derivative}. The remaining terms in \eqref{eq:variation-pointwise}, summed over $A$, are
\begin{equation*}
\sum_l\kh{\sum_i\sigma_{iil}}^2-\sum_{i,j,l}\sigma_{ijl}^2+\sum_{i,\alpha,l}\sigma_{i\alpha l}^2+p^2-p.
\end{equation*}
By the Gauss equation,
\begin{equation*}
\sum_i\operatorname{Ric}(e_i,e_i)=p(n-1)+\sum_{i,l}\mu_l\sigma_{iil}-\sum_{i,j,l}\sigma_{ijl}^2-\sum_{i,\alpha,l}\sigma_{i\alpha l}^2.
\end{equation*}
Their difference is $\mathcal L_p^0(P)-pq$. For general $v$, the changes in the two tangential terms are
\begin{equation*}
-2\sum_i v_i\sum_j\sigma_{jji}+\abs{v_P}^2,\qquad 2\sum_i v_i\sum_j\sigma_{jji}-\abs{v_P}^2.
\end{equation*}
They cancel. The transverse term changes by $q\abs{v_P}^2-2\sum_{i,\alpha}v_i\sigma_{i\alpha\alpha}$, proving \eqref{eq:rotation-drift}.
\end{proof}

\subsection{The cubic estimate}

For $2\leq p\leq\lfloor n/2\rfloor$, choose
\begin{equation*}
t_p=\min\dkh{\frac{q-p+2}{2(n+2)},\frac1q}.
\end{equation*}
When $\mu=\mathrm d\vartheta$, the choice $\eta=t_p\vartheta$ gives $\mathrm d\eta=t_p\mu$ in Lemma~\ref{lem:rotated-fields}. These fields remain smooth where $\HH=0$.

For a symmetric cubic tensor $\sigma$ with trace $\mu$, write
\begin{equation*}
\mathring\sigma_{ijk}=\sigma_{ijk}-\frac{\mu_i\delta_{jk}+\mu_j\delta_{ik}+\mu_k\delta_{ij}}{n+2},\qquad S=\abs{\sigma}^2,\quad h=\abs{\mu},\quad r=\abs{\mathring\sigma}.
\end{equation*}
Then $r^2=S-3h^2/(n+2)$.

\begin{lem}\label{lem:cubic-bound}
For every symmetric cubic tensor $\sigma$ as above and every $p$-plane $P$,
\begin{equation*}
\mathcal L_p^{t_p\mu}(P)\leq\mathcal E_{p,q}(S,h),
\end{equation*}
where
\begin{equation}\label{eq:envelope}
\begin{aligned}
&\mathcal E_{p,q}(S,h)=\frac{(p+2)(q+2)}{3(n+2)}r^2-\frac{pq}{(n+2)^2}h^2\\
&+\max_{0\leq z\leq1}\left\{qt_p\kh{t_p-\frac2{n+2}}zh^2 +\frac{rh}{n+2}\left[\frac{q(p+2)}{3(n+2)}\kh{q-p+2-2(n+2)t_p}^2z +\frac{p(q+2)}{3(n+2)}(q-p-2)^2(1-z)\right]^{1/2}\right\}.
\end{aligned}
\end{equation}
In particular, $\mathcal E_{p,q}(S,0)=\frac{(p+2)(q+2)}{3(n+2)}S$.
\end{lem}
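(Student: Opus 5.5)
The plan is to substitute the decomposition $\sigma=\mathring\sigma+\frac{1}{n+2}(\mu\odot\delta)$ into $\mathcal L_p^{t_p\mu}(P)$ and expand. This writes the expression as a quadratic form in $\mathring\sigma$, plus a bilinear term in $(\mathring\sigma,\mu)$, plus a quadratic form in $\mu$. Here $\mu\odot\delta$ denotes the symmetrized tensor $\mu_i\delta_{jk}+\mu_j\delta_{ik}+\mu_k\delta_{ij}$. Split $\mu=\mu_P+\mu_Q$ with $z=\abs{\mu_P}^2/h^2\in[0,1]$.

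\emph{Pure-$\mu$ part.} A direct computation of $\mathcal L_p^0$ on $\mu\odot\delta/(n+2)$ should produce terms like $(q+2)\abs{\mu_P}^2+(p+2)\abs{\mu_Q}^2$ and $-(p+2)(q+2)$-type cross contributions. Adding the rotation terms $qt_p^2\abs{\mu_P}^2-2t_p\sum_{i,\alpha}\mu_i\sigma_{i\alpha\alpha}$, whose trace part contributes $-2t_pq\abs{\mu_P}^2/(n+2)$, should give exactly $-\frac{pq}{(n+2)^2}h^2+qt_p(t_p-\frac{2}{n+2})zh^2$. This is a routine index computation, and I would check it against the $z=0,1$ cases.

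\emph{Pure-$\mathring\sigma$ part.} The claim is $\mathcal L_p^0(\mathring\sigma;P)\le\frac{(p+2)(q+2)}{3(n+2)}r^2$ for traceless symmetric cubic tensors. To prove it, decompose $\mathring\sigma$ into blocks $PPP$, $PPQ$, $PQQ$, $QQQ$. Write $\mathcal L^0=2\sum_{i\alpha l}T_{i\alpha l}^2-\langle a,b\rangle$, where $a_l=\sum_iT_{iil}$ and $b_l=\sum_\alpha T_{\alpha\alpha l}$. Tracelessness gives $a=-b$, so $-\langle a,b\rangle=\abs a^2$. It then remains to bound
\[
4\abs{T_{PPQ}}^2+4\abs{T_{PQQ}}^2+\abs{a}^2
\]
(counted with multiplicities) by a multiple of $\abs T^2=\abs{T_{PPP}}^2+3\abs{T_{PPQ}}^2+3\abs{T_{PQQ}}^2+\abs{T_{QQQ}}^2$. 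The bounds come from Cauchy--Schwarz on the trace components. The $P$-part of $a$ satisfies $\abs{a_P}^2\le\frac{3p}{p+2}\abs{T_{PPP}}^2$ via a traceless decomposition within $P$. Using $a=-b$, one instead uses $\abs{b_P}^2$ bounded through $T_{PQQ}$. Similarly for the $Q$-part. Optimizing the resulting weights should yield the constant $\frac{(p+2)(q+2)}{3(n+2)}$, which is attained by the tensor $\mu\odot\delta$ projected suitably. This optimization is the main obstacle. I would first try to express everything via the four trace vectors $\operatorname{tr}_PT_{P\cdot}$, $\operatorname{tr}_QT_{P\cdot}$, etc., plus traceless remainders, and maximize a $2\times2$ quadratic form.

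\emph{Cross term.} The cross term is linear in $\mathring\sigma$. Its $\mu_P$-part pairs $\mathring\sigma$ against a tensor proportional to $(q-p+2-2(n+2)t_p)\,\mu_P\odot\cdots$, and its $\mu_Q$-part pairs against $(q-p-2)\,\mu_Q\odot\cdots$. Bound it by Cauchy--Schwarz, but jointly with the quadratic part. Write $\mathcal L=\mathcal Q(\mathring\sigma)+\langle\mathring\sigma,W\rangle+c$. Using the largest eigenvalue $\lambda=\frac{(p+2)(q+2)}{3(n+2)}$ of $\mathcal Q$ together with the projection of $W$ onto traceless tensors, the norm bound for $W$ gives
\[
\frac{rh}{n+2}\Bigl[\tfrac{q(p+2)}{3(n+2)}(\cdots)^2z+\tfrac{p(q+2)}{3(n+2)}(q-p-2)^2(1-z)\Bigr]^{1/2}.
\]
The factors $\frac{q(p+2)}{3(n+2)}$ arise as squared norms of the traceless projections of $\mu_P\otimes\delta_Q$-type tensors. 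Finally, maximize over the unknown $z\in[0,1]$. At $h=0$ only the first term survives with $r^2=S$, giving the last claim.
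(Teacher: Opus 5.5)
Your overall plan matches the paper's. You split off $\mu\odot\delta/(n+2)$ and compute the pure-$\mu$ terms; your $-\frac{pq}{(n+2)^2}h^2+qt_p\kh{t_p-\frac2{n+2}}zh^2$ is correct. You then bound the $\mathring\sigma$-quadratic part by $\lambda r^2$ with $\lambda=\frac{(p+2)(q+2)}{3(n+2)}$, bound the cross term by Cauchy--Schwarz, and maximize over $z$.

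The gap is that the two estimates carrying all the content are asserted rather than derived. These are $\mathcal L_p^0(\mathring\sigma;P)\leq\lambda r^2$, which you yourself flag as the main obstacle, and the exact form and dual norm of the cross term. Moreover, the sketch you give for the first does not close as written:
\begin{enumerate}[(i)]
\item For trace-free $T$ one has $\mathcal L_p^0(T;P)=2\abs{T_{PPQ}}^2+2\abs{T_{PQQ}}^2+\abs{a_P}^2+\abs{a_Q}^2$, where $\abs{T_{PPQ}}^2=\sum_{i,j,\alpha}T_{ij\alpha}^2$, not $4\abs{T_{PPQ}}^2+\cdots$.
\item The trace part $a_P\odot\delta_P/(p+2)$ of the $PPP$ block has squared norm $\frac{3}{p+2}\abs{a_P}^2$. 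So the sharp bound is $\abs{a_P}^2\leq\frac{p+2}3\abs{T_{PPP}}^2$. Your constant $\frac{3p}{p+2}$ is false for $p\geq5$ and too weak to reach $\lambda$ for $p=2,3$.
\item Bounding $a_P$ through one block only cannot give $\lambda$. Coefficient comparison would require $\frac{p+2}3\leq\lambda$, i.e.\ $n\leq q$, via $PPP$. Via $\abs{a_P}^2\leq q\abs{T_{PQQ}}^2$ it would require $2+q\leq3\lambda$, i.e.\ $n\leq p$.
\end{enumerate}

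The missing idea is that total trace-freeness makes $a_P$ simultaneously the $P$-trace of the $PPP$ block and minus the $Q$-trace of the $PQQ$ block. Likewise $a_Q$ is shared by the $PPQ$ and $QQQ$ blocks, so both blocks must be charged at once. The paper encodes this in the orthogonal block decompositions with trace-free remainders $\rho$, which give
\[
r^2=\frac{3(n+2)}{q(p+2)}\abs{a_P}^2+\frac{3(n+2)}{p(q+2)}\abs{a_Q}^2+\abs{\rho_{PPP}}^2+3\abs{\rho_{PPQ}}^2+3\abs{\rho_{PQQ}}^2+\abs{\rho_{QQQ}}^2.
\]
From this comes the exact identity \eqref{eq:cubic-identity}:
\begin{enumerate}[(i)]
\item $\lambda r^2$ exceeds the quadratic part by $\lambda\kh{\abs{\rho_{PPP}}^2+\abs{\rho_{QQQ}}^2}+\frac{pq}{n+2}\kh{\abs{\rho_{PPQ}}^2+\abs{\rho_{PQQ}}^2}$;
\item the full cross term, including the rotation contribution $2t_p\jkh{a_P,\mu_P}$, equals $-\kh{\frac{q-p+2}{n+2}-2t_p}\jkh{a_P,\mu_P}-\frac{q-p-2}{n+2}\jkh{a_Q,\mu_Q}$.
\end{enumerate}

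In your weight-splitting language, the fix is to charge $\abs{a_P}^2$ with weight $\frac{q+2}{n+2}$ to the $PPP$ bound and $\frac{p}{n+2}$ to the $PQQ$ bound, and symmetrically for $a_Q$. The same lower bound on $r^2$ in terms of $a_P,a_Q$ yields the bracket in \eqref{eq:envelope} by weighted Cauchy--Schwarz. Your description of its coefficients as norms of projections of ``$\mu_P\otimes\delta_Q$-type tensors'' only restates the target until this is computed. Once it is, your bound $\lambda r^2+r\abs{W_\perp}$ is exactly the paper's. It is sharp because $W_\perp$ lies in the $\lambda$-eigenspace spanned by the $a$-directions.
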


\begin{proof}
Set
\begin{equation*}
a_l=\sum_i\mathring\sigma_{iil}=-\sum_\alpha\mathring\sigma_{\alpha\alpha l}.
\end{equation*}
The four blocks of $\mathring\sigma$ have the orthogonal decompositions
\begin{equation*}
\begin{aligned}
\mathring\sigma_{ijk}=&\rho_{ijk}+\frac{a_i\delta_{jk}+a_j\delta_{ik}+a_k\delta_{ij}}{p+2},\\
\mathring\sigma_{ij\alpha}=&\rho_{ij\alpha}+\frac{a_\alpha}{p}\delta_{ij},\\
\mathring\sigma_{i\alpha\gamma}=&\rho_{i\alpha\gamma}-\frac{a_i}{q}\delta_{\alpha\gamma},\\
\mathring\sigma_{\alpha\gamma\delta}=&\rho_{\alpha\gamma\delta}-\frac{a_\alpha\delta_{\gamma\delta}+a_\gamma\delta_{\alpha\delta}+a_\delta\delta_{\alpha\gamma}}{q+2}.
\end{aligned}
\end{equation*}
Each $\rho$ is trace free in the indices from the same subspace. Since each mixed block occurs three times in the full norm,
\begin{equation}\label{eq:cubic-norm}
r^2=\frac{3(n+2)}{q(p+2)}\abs{a_P}^2+\frac{3(n+2)}{p(q+2)}\abs{a_Q}^2+\abs{\rho_{PPP}}^2+3\abs{\rho_{PPQ}}^2+3\abs{\rho_{PQQ}}^2+\abs{\rho_{QQQ}}^2.
\end{equation}
Substitution in \eqref{eq:rotated-trace} gives
\begin{equation}\label{eq:cubic-identity}
\begin{aligned}
\mathcal L_p^{t_p\mu}(P)=&\frac{(p+2)(q+2)}{3(n+2)}r^2-\frac{pq}{(n+2)^2}h^2+qt_p\kh{t_p-\frac2{n+2}}\abs{\mu_P}^2-\frac{(p+2)(q+2)}{3(n+2)}\kh{\abs{\rho_{PPP}}^2+\abs{\rho_{QQQ}}^2}\\
&-\kh{\frac{q-p+2}{n+2}-2t_p}\jkh{a_P,\mu_P}-\frac{q-p-2}{n+2}\jkh{a_Q,\mu_Q}-\frac{pq}{n+2}\kh{\abs{\rho_{PPQ}}^2+\abs{\rho_{PQQ}}^2}.
\end{aligned}
\end{equation}
If $h=0$, this identity proves the assertion. If $h>0$, put $z=\abs{\mu_P}^2/h^2$. Equation~\eqref{eq:cubic-norm} gives the joint bound
\begin{equation*}
\frac{3(n+2)}{q(p+2)}\abs{a_P}^2+\frac{3(n+2)}{p(q+2)}\abs{a_Q}^2\leq r^2.
\end{equation*}
Weighted Cauchy--Schwarz therefore yields
\begin{equation*}
\begin{aligned}
&\abs{\kh{\frac{q-p+2}{n+2}-2t_p}\jkh{a_P,\mu_P}+\frac{q-p-2}{n+2}\jkh{a_Q,\mu_Q}}\\
\leq&\frac{rh}{n+2}\left[\frac{q(p+2)}{3(n+2)}\kh{q-p+2-2(n+2)t_p}^2z+\frac{p(q+2)}{3(n+2)}(q-p-2)^2(1-z)\right]^{1/2}.
\end{aligned}
\end{equation*}
Drop the last two nonpositive terms in \eqref{eq:cubic-identity} and maximize over $z\in[0,1]$.
\end{proof}

\subsection{The pinching threshold}

\begin{lem}\label{lem:controlling-degree}
Let $n\geq4$, $2\leq p\leq\lfloor n/2\rfloor$, $q=n-p$ and $S\geq3h^2/(n+2)$. If $S\leq F_n(h)$, then $\mathcal E_{p,q}(S,h)\leq pq$.
\end{lem}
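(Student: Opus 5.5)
The proof reduces to explicit one-variable calculus for each pair $(p,q)$. The function $\mathcal E_{p,q}(S,h)$ of Lemma~\ref{lem:cubic-bound} is increasing in $S$ for fixed $h$, because it depends on $S$ only through $r=\sqrt{S-3h^2/(n+2)}$, with a positive coefficient on $r^2$ and a nonnegative coefficient on $r$. It therefore suffices to prove $\mathcal E_{p,q}(F_n(h),h)\leq pq$, after writing $r_n(h)^2=F_n(h)-3h^2/(n+2)$; one checks directly that this is nonnegative.

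Next I would simplify the inner maximum over $z$. The bracket under the square root is affine in $z$, so its square root is concave, and $qt_p(t_p-2/(n+2))zh^2$ is linear in $z$. The maximum is therefore attained either at an endpoint or at an explicit critical point. With $t_p=\min\{(q-p+2)/(2(n+2)),1/q\}$ there are two regimes.
\begin{itemize}
\item If $t_p=(q-p+2)/(2(n+2))$, the $z$-coefficient inside the root vanishes, and the linear coefficient $qt_p(t_p-2/(n+2))=qt_p(q-p-2)/(2(n+2))$ is nonpositive exactly when $q\le p+2$. Then the maximum sits at $z=0$.
\item Otherwise one gets the maximum of $\alpha z+\beta\sqrt{c_1z+c_0(1-z)}$, which has the closed form $\alpha+\beta\sqrt{c_1}$, or $\beta\sqrt{c_0}$, or the interior value $\alpha z_*+\ldots$ obtained from a quadratic.
\end{itemize}

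The inequality $\mathcal E\le pq$ then becomes a quadratic inequality in $r$ of the form $Ar^2+Brh+Ch^2\le pq$. Solving it for $r$ gives an upper bound $r\le R_{p,q}(h)$ of the form $\bigl(-Bh+\sqrt{B^2h^2+4A(pq-Ch^2)}\bigr)/(2A)$. This matches the $h\sqrt{\cdot}$ structure of $F_n$ and of \eqref{eq:Fn-explicit}.

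The key step is to identify, for each $n$, which $p$ is binding, and to check that $F_n(h)\le 3h^2/(n+2)+\min_p R_{p,q}(h)^2$.
\begin{itemize}
\item For $n\ge7$ I expect $p=2$ to be the critical degree. The first branch of $F_n$ should be exactly the curve $\mathcal E_{2,n-2}=2(n-2)$ with the $z$-maximum interior. The second branch, with $h\ge\sqrt2(n-2)$, should correspond to the maximum at the endpoint $z=1$, where $t_2=1/q$ and the linear term is favorable. The value $\frac{12(n-2)}n$ at $h=0$ is $3pq/(\ldots)$, i.e.\ $pq\cdot3(n+2)/((p+2)(q+2))$ with $p=2$ rescaled appropriately, and is consistent with $\mathcal E_{p,q}(S,0)$.
\item For $3\le p\le\lfloor n/2\rfloor$ one must show that the threshold for degree $p$ dominates the one for $p=2$. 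At $h=0$ this reduces to comparing $pq(n+2)/((p+2)(q+2))$, which increases in $p$ on $p\le n/2$. For large $h$ I would compare leading coefficients of $h^2$ and then argue monotonicity in $p$, treating the expression as a function of a real variable $p\in[2,n/2]$.
\item For $n=4,5,6$ I would do finitely many explicit cases: $(2,2)$; $(2,3)$; and $(2,4),(3,3)$. The minimum in $F_6$ reflects the two degrees $p=2,3$ each being binding on part of the range.
\end{itemize}

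The main obstacle is the comparison across $p$ for general $n\ge7$ together with the case split in the $z$-maximization. The interior critical point produces nested radicals, and showing $F_n\le$ each threshold uniformly in $h$ is messy. I would first try to avoid solving for the maximizer. Instead I would prove the weaker statement that for every fixed $z\in[0,1]$ the resulting quadratic inequality in $(r,h)$ holds along $S=F_n(h)$, using the AM--GM bound $\beta\sqrt{u}\le \beta(u/(2\lambda)+\lambda/2)$ with a well-chosen $\lambda(h)$. The choice of $\lambda$ would be taken from the equality case defining $F_n$ at $p=2$, and the leftover inequalities for larger $p$ would be checked to be polynomial inequalities in $n,p,h^2$ with nonnegative coefficients.
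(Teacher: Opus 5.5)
Your skeleton matches the paper's: monotonicity in $S$ through $r$, reduction to $S=F_n(h)$, the split $q-p\le2$ (maximum at $z=0$) versus $q-p>2$ ($t_p=1/q$), solving a quadratic in $r$, and the explicit cases $(2,2)$, $(2,3)$, $(2,4)$, $(3,3)$ for $n=4,5,6$. The gap is the cross-degree comparison for $n\ge7$, which is where the real content lies.

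\textbf{The cross-degree comparison.} Checking $h=0$ and the leading $h^2$ coefficient says nothing about intermediate $h$. ``Argue monotonicity in $p$'' restates the claim rather than proving it. The missing idea is homogeneity. $\mathcal E_{p,q}$ is homogeneous of degree two in $(r,h)$, so set $u=(n+2)r/h$ and $D_p(u)=\mathcal E_{p,q}/\bigl(pq(h/(n+2))^2\bigr)$. Along each ray, $\mathcal E_{p,q}\le pq$ becomes $D_p(u)\le(n+2)^2/h^2$. It therefore suffices to show that $\max\{D_p(u),0\}$ is nonincreasing in $p$ for each fixed $u$.

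The paper proves this for real $p\in[2,(n-2)/2]$ from an explicit piecewise formula: the interior branch $D_p^{(1)}$ on $[U_0,U_1]$, the endpoint branch $D_p^{(2)}$ for $u\ge U_1$, tangency of the two branches at $U_1$, $a_p'<0$, $b_p'<0$, and $U_0$ increasing in $p$. It treats $p=\lfloor n/2\rfloor$, where $q-p\in\{0,1\}$, separately. There $t_p=(q-p+2)/(2(n+2))$, so these degrees do not lie on the same real-$p$ family. For them the comparison uses $a_p\le a_2$, $b_{0,p}<b_{0,2}$, and the lower bound $D_2(u)\ge a_2u^2-1+b_{0,2}u$ obtained by taking $z=0$.

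Your AM--GM fallback does not fill this gap. Coefficients such as $\sqrt{p(q+2)/(3(n+2))}$ are irrational in $p$, so there is no reason the leftover inequalities would be polynomial with nonnegative coefficients. You also give no choice of $\lambda(h)$.

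\textbf{The branches of $F_n$.} Your identification of the branches is reversed, and the nested-radical worry is unfounded. Write $y^2$ for the bracket, which is affine in $z$. Then $z$ is affine in $y^2$, the maximand is a concave quadratic in $y$, and its interior maximum is a quadratic form in $(r,h)$ with no radicals.

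Small $h$ along the threshold means large $u$, where the maximum sits at $z=1$. This gives the square-root branch of $F_n$ for $h\le\sqrt2(n-2)$. The interior maximum gives the purely quadratic branch $12(n-2)/n+3h^2/n$ for $h\ge\sqrt2(n-2)$. The reason is that at $u=U_1$ one has $\mathcal E_{2,q}=h^2/q$, so the level $2q$ falls in the interior range exactly when $h\ge\sqrt2\,q$. Correspondingly, $F_n(0)=3(n^2-4)/(2n)$, not $12(n-2)/n$. These errors would surface once you computed, but as written your verification plan for $p=2$ targets the wrong curves.
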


\begin{proof}
For fixed $h$ and $S_2>S_1\geq3h^2/(n+2)$, the square-root terms in \eqref{eq:envelope} are nondecreasing in $S$. Hence
\begin{equation}\label{eq:envelope-monotonicity}
\mathcal E_{p,q}(S_2,h)-\mathcal E_{p,q}(S_1,h)\geq\frac{(p+2)(q+2)}{3(n+2)}(S_2-S_1)>0.
\end{equation}
For $h=0$, we have
\begin{equation*}
\frac{\mathcal E_{p,q}(S,0)}{pq}=\kh{\frac1{3(n+2)}+\frac2{3pq}}S\leq\kh{\frac1{3(n+2)}+\frac1{3(n-2)}}F_n(0)=1.
\end{equation*}
For $h>0$, it suffices to take $S=F_n(h)$. Set
\begin{equation*}
u=\frac{(n+2)r}{h},\qquad D_p(u)=\frac{\mathcal E_{p,q}(S,h)}{pq\kh{h/(n+2)}^2}\quad(u\geq0).
\end{equation*}

If $q-p\leq2$, then $t_p=(q-p+2)/(2(n+2))$. Both terms in the maximand of \eqref{eq:envelope} are nonincreasing in $z$. The maximum is attained at $z=0$, and
\begin{equation}\label{eq:balanced-envelope}
\mathcal E_{p,q}=\frac{(p+2)(q+2)}{3(n+2)}r^2-\frac{pq}{(n+2)^2}h^2+\frac{\abs{q-p-2}}{n+2}\sqrt{\frac{p(q+2)}{3(n+2)}}\,rh.
\end{equation}

If $q-p>2$, then $t_p=1/q$. With
\begin{equation*}
y=\sqrt{\frac{p(q+2)}{3(n+2)}+\frac{2(q+2)}{3q}z},
\end{equation*}
equation~\eqref{eq:envelope} becomes
\begin{equation*}
\frac{(n+2)^2}{h^2}\mathcal E_{p,q}=\frac{(p+2)(q+2)}{3(n+2)}u^2-pq+(q-p-2)\max_{0\leq z\leq1}\dkh{-\frac{3(n+2)}{2(q+2)}y^2+uy+\frac p2}.
\end{equation*}
The vertex $y=u(q+2)/(3(n+2))$ enters and leaves the allowed interval at $U_0$ and $U_1$, where
\begin{equation*}
U_0^2=\frac{3(n+2)p}{q+2},\qquad U_1^2=\frac{3(n+2)(p+2)}q.
\end{equation*}
Substitution gives $\mathcal E_{p,q}=0$ at $u=U_0$; hence $\mathcal E_{p,q}\leq0$ for $u\leq U_0$. For $u\geq U_0$,
\begin{equation}\label{eq:normalized-envelope}
D_p(u)=
\begin{cases}
\displaystyle D_p^{(1)}(u)=\frac{u^2}{6p}+\frac{u^2}{3pq}-\frac{n+2}{2q},&U_0\leq u\leq U_1,\\[6pt]
 \displaystyle D_p^{(2)}(u)=a_pu^2+b_pu-1-\frac{(n+2)(q-p-2)}{pq^2},&u\geq U_1,
\end{cases}
\end{equation}
where
\begin{equation*}
a_p=\frac1{3(n+2)}+\frac2{3pq},\qquad b_p=\frac1{\sqrt{3(n+2)}}\,\frac{q-p-2}{q}\,\frac{q+2}{\sqrt q}\,\frac{\sqrt{p+2}}p.
\end{equation*}
When $q-p=2$, both expressions reduce to $a_pu^2-1$, so the same formulas apply.

Now suppose $n\geq7$. Fix $n,u$ and allow $p\in[2,(n-2)/2]$ to be real, with $q=n-p$. Since $pq$ increases and $q$ decreases, $\partial_pD_p^{(1)}<0$ and $a_p'<0$. For the three factors in $b_p$,
\begin{equation*}
\frac{\mathrm{d}}{\mathrm{d}p}\frac{q-p-2}{q}=-\frac{n+2}{q^2},\qquad \frac{\mathrm{d}}{\mathrm{d}p}\frac{q+2}{\sqrt q}=-\frac{q-2}{2q^{3/2}},\qquad \frac{\mathrm{d}}{\mathrm{d}p}\frac{\sqrt{p+2}}p=-\frac{p+4}{2p^2\sqrt{p+2}}.
\end{equation*}
Since $q\geq p+2\geq4$, it follows that $b_p'<0$. Moreover,
\begin{equation}\label{eq:branch-difference}
D_p^{(2)}(u)-D_p^{(1)}(u)=-\frac{(q-p-2)(q+2)}{6(n+2)pq}(u-U_1)^2.
\end{equation}
The branches and their $u$ derivatives agree at $U_1$. Differentiating at this moving endpoint gives
\begin{equation*}
\partial_pD_p^{(2)}(U_1)=\partial_pD_p^{(1)}(U_1)<0.
\end{equation*}
Also,
\begin{equation*}
\partial_u\partial_pD_p^{(2)}=2a_p'u+b_p'<0.
\end{equation*}
Consequently, $\partial_pD_p^{(2)}<0$ for $u\geq U_1$. Since $U_0$ increases with $p$, the function $\max\dkh{D_p(u),0}$ is nonincreasing in $p$. In particular,
\begin{equation}\label{eq:degree-domination}
\frac{\mathcal E_{p,q}}{pq}\leq\max\dkh{\frac{\mathcal E_{2,n-2}}{2(n-2)},0}\qquad(q-p\geq2).
\end{equation}

It remains to compare the degrees with $q-p=0,1$. By \eqref{eq:balanced-envelope},
\begin{equation*}
D_p(u)=a_pu^2-1+b_{0,p}u,\qquad b_{0,p}=\frac{\abs{q-p-2}}q\sqrt{\frac{q+2}{3(n+2)p}}.
\end{equation*}
Taking $z=0$ in the degree-two maximum gives $D_2(u)\geq a_2u^2-1+b_{0,2}u$. We have $a_p\leq a_2$ and
\begin{equation*}
b_{0,2}^2=\frac16\,\frac n{n+2}\kh{\frac{n-6}{n-2}}^2,
\end{equation*}
which increases with $n\geq7$. For $n=2m+1$ and $n=2m$, respectively, with $p=m$, the remaining coefficients satisfy
\begin{equation*}
b_{0,p}^2=\frac{1+3/m}{3(2m+3)(m+1)^2},\qquad b_{0,p}^2=\frac{2}{3m^3}\kh{1+\frac1{m+1}}.
\end{equation*}
Both expressions decrease with $m$. Their ratios to $b_{0,2}^2$ at $n=7,8$ are $25/28$ and $27/32$, respectively. Hence $b_{0,p}<b_{0,2}$ in every remaining degree. Together with \eqref{eq:degree-domination}, this proves
\begin{equation}\label{eq:degree-two-suffices}
\mathcal E_{2,n-2}\leq2(n-2)\quad\Longrightarrow\quad\mathcal E_{p,n-p}\leq p(n-p)\quad(n\geq7).
\end{equation}

It remains to solve $\mathcal E_{p,q}=pq$. When $q-p\leq2$, equation~\eqref{eq:balanced-envelope} gives
\begin{equation*}
r=\frac1{2(p+2)}\sqrt{\frac{3p(n+2)}{q+2}}\kh{\sqrt{h^2+4q(p+2)}-\frac{\abs{q-p-2}}{n+2}h},
\end{equation*}
where we used $(q-p-2)^2+4q(p+2)=(n+2)^2$. Substituting $(p,q)=(2,2),(2,3),(3,3)$ and then using $S=r^2+3h^2/(n+2)$ gives $F_4$, $F_5$ and the second expression in the definition of $F_6$, respectively. For $(p,q)=(2,4)$, the root is $S=8+h^2/2$. This proves the lemma in dimensions four, five and six.

For $n\geq7$, take $p=2$ and $q=n-2\geq5$. At $u=U_1$, one has $\mathcal E_{2,q}=h^2/q$. Since the expression is increasing in $r$, its level $2q$ lies in the first branch of \eqref{eq:normalized-envelope} exactly when $h\geq\sqrt2\,q$. This branch gives
\begin{equation*}
S=\frac{12q+3h^2}{q+2}.
\end{equation*}
For $0<h\leq\sqrt2\,q$, the second branch gives
\begin{equation}\label{eq:p2-threshold-root}
r=\sqrt{\frac{3(n+2)}{16q}}\kh{\sqrt{\frac{q-2}{q+2}h^2+\frac{8q^2}{q+2}}-\frac{q-4}{n+2}h}.
\end{equation}
After substituting $S=r^2+3h^2/(n+2)$, these are precisely the two expressions in \eqref{eq:Fn-explicit}. The assertion now follows from \eqref{eq:degree-two-suffices}.
\end{proof}

\begin{proof}[Proof of Theorem~\ref{thm:sphere}]
Since $b_1(M;\mathbb R)=0$, write $\mu=\mathrm d\vartheta$. For each $2\leq p\leq\lfloor n/2\rfloor$, use the fields in Lemma~\ref{lem:rotated-fields} with $\eta=t_p\vartheta$. Put $S=\abs{\BB}^2$ and $h=\abs{\HH}$. Lemmas~\ref{lem:cubic-bound} and \ref{lem:controlling-degree}, together with \eqref{eq:envelope-monotonicity}, give
\begin{equation*}
\sum_A Q_\xi(X_A)=\mathcal L_p^{t_p\mu}(P)-pq
\leq\mathcal E_{p,q}(S,h)-pq
\leq-\frac{(p+2)(q+2)}{3(n+2)}\kh{F_n(h)-S}<0.
\end{equation*}
Proposition~\ref{prop:LS-sphere} completes the proof.
\end{proof}

\subsection{Comparison with the Luo--Sun bound}

\begin{lem}\label{lem:delta-comparison}
For every $n\geq5$ and $h\geq0$,
\begin{equation*}
F_n(h)>\delta_n(h).
\end{equation*}
\end{lem}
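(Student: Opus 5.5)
The plan is to parametrize the comparison by the Calabi parameter instead of by $h$. Put $h=n\lambda-\lambda^{-1}$ with $\lambda\geq n^{-1/2}$; this is a bijection onto $h\geq0$. Then $\sqrt{h^2+4n}=n\lambda+\lambda^{-1}$ is rational in $\lambda$, and the formula recalled in the introduction gives
\begin{equation*}
\delta_n(h)=(3n-2)\lambda^2+\lambda^{-2}-2 .
\end{equation*}
After substituting $s=\lambda^2\geq1/n$, every inequality reduces to comparing a rational function of $s$ with $F_n(h)$. The branches of $F_n$ are then treated one at a time. Only one square root, coming from $F_n$, remains in each comparison, and it is removed by isolating and squaring.

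\emph{Quadratic branch} ($n\geq7$, $h\geq\sqrt2(n-2)$). Substituting the parametrization gives
\begin{equation*}
F_n(h)-\delta_n(h)=2s+\Bigl(\frac3n-1\Bigr)s^{-1}+\frac{12(n-2)}n-4 .
\end{equation*}
On this branch $h$ is large, so $n\lambda\geq h\geq\sqrt2(n-2)$ and $s\geq 2(n-2)^2/n^2$. The term $2s$ dominates, and $(1-3/n)s^{-1}<1$. Hence the difference is positive by an elementary check, using $12(n-2)/n-4>0$ for $n\geq7$.

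\emph{Square-root branches.} These are the first branch of \eqref{eq:Fn-explicit} for $0\leq h\leq\sqrt2(n-2)$, and the expressions $F_5$ and $F_6$ from \eqref{eq:Fn-low}. In each case $F_n(h)-\delta_n(h)$ has the form
\begin{equation*}
A(s)-c_n\,h\sqrt{C(h)},
\end{equation*}
where $A$ collects the rational terms (including the term $\frac{(n-1)(n-2)}{2n^2}h(n\lambda+\lambda^{-1})$ coming from $\delta_n$), $c_n\geq0$, and $C(h)>0$. I will show $A>0$ on the relevant range. Then the inequality is equivalent to $A^2-c_n^2h^2C>0$, which after clearing powers of $\lambda$ is a polynomial inequality in $s$ with coefficients depending on $n$. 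For $n=6$ both entries of the minimum must exceed $\delta_6$. The entry $8+h^2/2$ is handled like the quadratic branch, and the other is a single-variable polynomial check. For $n=5$ the comparison is likewise one explicit polynomial in $s$. The endpoint value at $h=0$ serves as a sanity check:
\begin{equation*}
F_n(0)=\frac{3(n^2-4)}{2n}>\frac{(n-1)(n+2)}n=\delta_n(0),
\end{equation*}
since this is equivalent to $3(n-2)>2(n-1)$, i.e.\ $n>4$. This also shows why $n=4$ fails.

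The main obstacle is the general-$n$ polynomial on the first branch for $n\geq7$, uniformly in both $n$ and $s$ over the bounded interval $1/n\leq s\leq s_*$, where $s_*$ corresponds to $h=\sqrt2(n-2)$. My first attempt will be to rewrite $n^2(A^2-c_n^2h^2C)$ in the variable $x=n s\geq1$ and organize it as a polynomial in $x$ whose coefficients are polynomials in $n$. I would then verify positivity by grouping terms with a fixed sign for $n\geq7$, and use $x\geq1$ to absorb the negative ones. If this grouping fails near $x=1$ (that is, $h$ near $0$), I would instead use a crude bound $\sqrt{C}\leq\sqrt{C(0)}+\beta h$ with suitable $\beta$. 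This turns the comparison into a quadratic inequality in $h$ and $\sqrt{h^2+4n}$, where the slack $F_n(0)-\delta_n(0)=(n-4)/2\cdot\frac{n+2}{n}$ at the origin should carry the argument.
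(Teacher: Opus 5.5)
Your substitution is the one the paper uses (its variable $x$ equals $\lambda^{-2}=1/s$), and the pieces you actually carry out are correct.
\begin{itemize}
\item The quadratic-branch identity, together with $s\geq 2(n-2)^2/n^2>1$, is essentially the paper's argument, and the check at $h=0$ matches.
\item $A>0$ does hold on the first branch: there $A=\frac{3n^2-18n+16}{8}s+\frac{3n^2+2n-24}{4n}-\frac{5n-6}{8ns}$ is increasing in $s$ and equals $\frac{(n-4)(n+2)}{2n}$ at $s=1/n$.
\item $n=5,6$ are genuine one-variable checks. The paper does them with $R_a\geq x$ and $0\leq R_b-R_a\leq(b-a)/2$.
\end{itemize}

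The genuine gap is the first branch for $n\geq7$. This is the substance of the lemma, and you leave it as a plan that is unlikely to close as stated. For $s$ of order one (that is, $h$ of order $n$), both $A$ and $c_nh\sqrt{C}$ equal $\frac38n^2s+O(n)$, while $F_n-\delta_n$ is only $O(1)$; for instance it is about $7.8$ at $h=9\sqrt2$ when $n=20$. So in $s^2(A^2-c_n^2h^2C)$ all terms of order $n^4$ and $n^3$ cancel, and a sign-grouping argument uniform in $n$ has no visible handle.

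The fallback only helps near $h=0$. Any bound $\sqrt{C}\leq\sqrt{C(0)}+\beta h$ valid on the whole branch needs $\beta$ at least the secant slope. Near the middle of the branch it then loses an amount of order $n^{3/2}$. Already for $n=20$ at $h=9\sqrt2$ the loss is about $8.6$, which exceeds the true gap.

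The missing idea is to compare trace-free parts instead of $S$. Work in the paper's variable $x=1/s$ (not your $x=ns$). There $\delta_n(h)-\frac{3h^2}{n+2}=\frac{n-1}{n+2}\cdot\frac{(x+2)^2}{x}$ is a perfect square over $x$, while $F_n(h)-\frac{3h^2}{n+2}=r^2$ with $r$ the explicit root \eqref{eq:p2-threshold-root}. Comparing the positive roots and multiplying by $\sqrt{x}$ turns $\delta_n<F_n$ into $\Psi(x)>0$, where $\Psi(x)=(n+2)\sqrt{\frac{n-4}{n}(n-x)^2+\frac{8q^2}{n}x}-\kappa(x+2)-(n-6)(n-x)$ and $\kappa=4\sqrt{(n-1)q/3}$. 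This $\Psi$ is concave on $[0,n]$, so the endpoint values $\Psi(0)>0$ and $\Psi(n)>0$ suffice.

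In your notation, write $F_n-\frac{3h^2}{n+2}=(u-v)^2$ and $\delta_n-\frac{3h^2}{n+2}=w^2$, with $u$ a constant multiple of $\sqrt{C}$ and $v$ a constant multiple of $h$. Then $A=u^2+v^2-w^2$ and $c_nh\sqrt{C}=2uv$. Your quartic therefore factors over $\mathbb R$ as $s\bigl(u^2-(v+w)^2\bigr)\cdot s\bigl(u^2-(v-w)^2\bigr)$, a product of two quadratics in $s$, and the first factor is the squared form of $\Psi>0$. Isolating $\sqrt{C}$ alone, rather than $h\sqrt{C}$, is what makes the case $n\geq7$ tractable. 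Without this reduction or an equivalent one, the proposal does not prove the lemma there.
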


\begin{proof}
At $h=0$, the assertion follows from $F_n(0)-\delta_n(0)=(n+2)(n-4)/(2n)>0$. We therefore assume $h>0$. For $x\geq0$, put $R_a=\sqrt{x(x+a)}$. Rationalization gives
\begin{equation*}
R_a\geq x,\qquad 0\leq R_b-R_a\leq\frac{b-a}{2}\qquad(0<a<b).
\end{equation*}
Applying these inequalities to \eqref{eq:Fn-low} and \eqref{eq:delta}, with $x=h^2$, gives
\begin{equation*}
\begin{aligned}
F_5(h)-\delta_5(h)\geq&\frac7{40}+\frac{x}{200}>0,\\
8+\frac{x}{2}-\delta_6(h)\geq&\frac43+\frac{x}{18}>0,\\
\frac{216}{25}+\frac{66}{125}x-\frac9{125}R_{60} -\delta_6(h) \geq&\frac{254}{375}+\frac{13}{1125}x>0.
\end{aligned}
\end{equation*}
In the first and third estimates, we used $R_{48}-R_{20}\leq14$ and $R_{60}-R_{24}\leq18$, respectively. This proves the assertion for $n=5,6$.

Let $n\geq7$ and put $q=n-2$. Make the substitution
\begin{equation*}
a=\frac{\sqrt{h^2+4n}-h}{2},\qquad x=a^2.
\end{equation*}
Then $0<x\leq n$, and $a^2+ha=n$ gives
\begin{equation}\label{eq:delta-parameter}
h=\frac{n-x}{\sqrt x},\qquad \delta_n(h)=x+\frac{3n-2}{x}-2,\qquad \delta_n(h)-\frac{3h^2}{n+2} =\frac{n-1}{n+2}\frac{(x+2)^2}{x}.
\end{equation}

First suppose $h\leq\sqrt2\,q$. Comparing the positive square roots in \eqref{eq:p2-threshold-root} and \eqref{eq:delta-parameter}, we obtain $\delta_n(h)<F_n(h)$ if and only if $\Psi(x)>0$, where
\begin{equation*}
\Psi(x)=(n+2)\sqrt{\frac{n-4}{n}(n-x)^2+\frac{8q^2}{n}x} -\kappa(x+2)-(n-6)(n-x),\qquad \kappa=4\sqrt{\frac{(n-1)q}{3}}.
\end{equation*}
The radicand is
\begin{equation*}
\frac{n-4}{n}(n+x)^2+\frac{4(q^2+4)}n x.
\end{equation*}
The function $\sqrt{A(n+x)^2+Bx}$ is concave for $A,B>0$, since its second derivative is
\begin{equation*}
-\frac{B(4An+B)}{4\dkh{A(n+x)^2+Bx}^{3/2}}<0.
\end{equation*}
Thus $\Psi$ is concave on $[0,n]$. At its right endpoint,
\begin{equation*}
\Psi(n)=(n+2)(2\sqrt2\,q-\kappa)>0, \qquad \frac{\kappa^2}{8q^2}=\frac{2(n-1)}{3(n-2)}<1.
\end{equation*}
At the left endpoint, $\sqrt{n(n-4)}>n-3$ gives
\begin{equation*}
\Psi(0)>5n-6-2\kappa>0.
\end{equation*}
The last inequality follows from
\begin{equation*}
2\kappa< \frac8{\sqrt3}\kh{n-\frac32}<5n-6.
\end{equation*}
Concavity now yields $\Psi(x)>0$ throughout $[0,n]$.

If $h\geq\sqrt2\,q$, then $\sqrt2(n-2)>n-1$, and equation \eqref{eq:delta-parameter} implies $x<1$. Hence
\begin{equation*}
\delta_n(h)-\frac3n h^2 =4+\frac{n-3}{n}x-\frac2x <3-\frac3n<\frac{12(n-2)}n.
\end{equation*}
The second branch of \eqref{eq:Fn-explicit} gives the result.
\end{proof}

\begin{proof}[Proof of Theorem~\ref{thm:luo-sun}]
For $n\geq5$, Lemma~\ref{lem:delta-comparison} gives $\abs{\BB}^2\leq\delta_n(\abs{\HH})<F_n(\abs{\HH})$. Theorem~\ref{thm:sphere} applies.
\end{proof}

\begin{proof}[Proof of Corollary~\ref{cor:lagrangian}]
The pullback of the Hopf connection by $\phi$ is flat, because its curvature is a constant multiple of the K\"ahler form. Since $M$ is simply connected, $\phi$ has a global horizontal lift $F:M\to\mathbb S^{2n+1}$. This lift is isometric and Legendrian. Its second fundamental form has no vertical component:
\begin{equation*}
\jkh{ \BB^F(X,Y),JF} =-\jkh{\mathrm{d} F(Y),J\mathrm{d} F(X)}=0.
\end{equation*}
Its horizontal second fundamental form projects to that of $\phi$. Consequently $\abs{\BB^F}=\abs{\BB^\phi}$ and $\abs{\HH^F}=\abs{\HH^\phi}$. The assertion follows from Theorem~\ref{thm:sphere} and Theorem~\ref{thm:luo-sun}.
\end{proof}

\section{Pinching and isotropic curvature}\label{sec:examples}

We compare the pinching condition with isotropic curvature. For an orthonormal four-frame $(e_1,e_2,e_3,e_4)$, put
\begin{equation}\label{eq:isotropic}
\mathcal I(e_1,e_2,e_3,e_4)=R_{1313}+R_{1414}+R_{2323}+R_{2424}-2R_{1234}.
\end{equation}

\begin{theorem}\label{thm:separation}
For every $n\geq5$, there is a Legendrian embedding $\phi:\mathbb S^n\longrightarrow \mathbb S^{2n+1}$ such that, at every point and for every tangent $p$-plane $P$,
\begin{equation*}
\abs{\BB}^2<F_n(\abs{\HH}),\qquad \mathcal L_p^0(P)<p(n-p) \quad(2\leq p\leq n-2).
\end{equation*}
Moreover, $\mathcal I<0$ on an orthonormal four-frame at one point. For $n\geq6$, the embedding can also be chosen to satisfy the Lawson--Simons inequalities in degrees $1$ and $n-1$.
\end{theorem}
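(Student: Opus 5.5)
The plan is to perturb the totally geodesic Legendrian sphere by a small, compactly supported Legendrian deformation that is concentrated near one point. The deformation should give the cubic form $\sigma$ at that point a prescribed shape. Legendrian deformations of the standard $\mathbb S^n\subset\mathbb S^{2n+1}$ are generated locally by functions: a Legendrian graph over the totally geodesic sphere is given, via the contact Weinstein neighbourhood, by the $1$-jet of a function $f$. To leading order its cubic form is $\sigma=\varepsilon\nabla^3 f$ (symmetrized), and errors are $O(\varepsilon^2)$. Equivalently, one can use the cone picture of Section~\ref{sec:examples}: Lagrangian graphs $\mathrm{d}F$ of homogeneous functions in $\mathbb C^{n+1}$. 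So I would choose $f=\varepsilon\chi(x)\,c(x)$, where $c$ is a cubic polynomial in normal coordinates at a point $x_0$ and $\chi$ is a cutoff. Then $\sigma(x_0)=\varepsilon\,T$ for a chosen symmetric cubic tensor $T$. Since $\mathcal I$ is quadratic in $\sigma$ by the Gauss equation while the constant-curvature part is fixed, the sign of $\mathcal I$ at $x_0$ can only change if $\sigma$ has size of order one there. So the deformation cannot be infinitesimally small. Instead I take $f=\varepsilon\chi(x/\varepsilon)c(x)$-type rescalings, or simply a fixed moderate amplitude, and control all other quantities.

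\emph{Step 1 (algebraic model).} Find a symmetric cubic tensor $T$ on $\mathbb R^n$ satisfying the following conditions. By the Gauss equation $R_{ijkl}=\delta_{ik}\delta_{jl}-\delta_{il}\delta_{jk}+\sum_m(T_{ikm}T_{jlm}-T_{ilm}T_{jkm})$, we want $\mathcal I(e_1,\dots,e_4)=4+Q_{\mathcal I}(T)<0$. We also want $|T|^2<F_n(|\operatorname{tr}T|)$ and $\mathcal L_p^0(T;P)<p(n-p)$ for all $P$ and $2\le p\le n-2$; by \eqref{eq:classical-complement} it suffices to take $p\le n/2$. I would try $T$ supported in the span of $e_1,\dots,e_4$ and a fifth direction, designed so that $R_{1234}$ is large and positive while the sectional terms stay moderate. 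For instance, use components like $T_{135}=T_{245}=s$ and $T_{145}=-T_{235}=s$, possibly with a trace part to exploit the $h$-dependence of $F_n$. Compute $\mathcal I$, $|T|^2$, $\operatorname{tr}T$ and $\max_P\mathcal L_p^0(T;P)$ as functions of $s$. The bound $\mathcal L_p^0\le\mathcal E_{p,q}$ from Lemma~\ref{lem:cubic-bound} at $t_p$ replaced by $0$, or directly the decomposition \eqref{eq:cubic-norm}, gives a clean upper bound $\mathcal L_p^0\le\frac{(p+2)(q+2)}{3(n+2)}r^2+\dots$ to verify. The need for $n\ge5$ (a fifth index) suggests this ansatz.

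\emph{Step 2 (scaling).} Replace $T$ by $\lambda T$ and choose $\lambda$ in the nonempty window where $\mathcal I<0$ but the pinching inequalities remain strict. All pinching quantities are strictly satisfied at $\sigma=0$ (where $\mathcal L_p^0=0<pq$ and $0<F_n(0)$). So the real content is that the window of $\lambda$ is nonempty. This is the main obstacle: it requires an explicit and somewhat delicate choice of $T$ with good ratio $-Q_{\mathcal I}(T)/|T|^2$ compared to $F_n$ and to $\max_P \mathcal L_p^0$. I expect to tune the tensor numerically and then verify the inequalities with exact arithmetic, treating $n=5$ as the tightest case and checking monotonicity in $n$.

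\emph{Step 3 (globalization).} Realize $\sigma(x_0)=\lambda T$ by a Legendrian embedding of $\mathbb S^n$. Take the generating function $f=\chi\cdot c$ with $\nabla^3c(x_0)=\lambda T$, but with $c$ chosen so that $|\nabla^3 f|$ attains its largest pinching ratio at $x_0$. For this, let $c$ be multiplied by a radial profile whose third derivatives are dominated by the point value along a ray of tensors $sT$, $0\le s\le1$. Then make the support tiny with $f=\epsilon^{3}\,c(x/\epsilon)$-type scaling, keeping $\nabla^3f$ fixed in size while $\nabla f,\nabla^2 f=O(\epsilon)$. The resulting graph is $C^1$-close to the round sphere, hence an embedding. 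Its $\sigma$ equals $\nabla^3f+O(\epsilon)$, so the strict inequalities persist for small $\epsilon$. The degree $1$, $n-1$ Lawson--Simons inequalities for $n\ge6$ follow from the same strictness, after checking them for the model tensor.
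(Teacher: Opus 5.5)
Your overall architecture matches the paper's: a compactly supported Legendrian perturbation of the totally geodesic sphere whose cubic form is an $O(1)$ trace-free tensor $T$ at one point, checked against the trace-free bound \eqref{eq:tracefree-separation-bound}. But both load-bearing steps are missing.

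\textbf{Step~1.} This is the entire algebraic content of the theorem, and you leave it open. Worse, the ansatz you suggest fails exactly in the tight dimensions. For $T_{135}=T_{245}=T_{145}=-T_{235}=s$, on the frame $e_1,\dots,e_4$ it is designed for, you get $R_{1313}=R_{1414}=R_{2323}=R_{2424}=1-s^2$ and $R_{1234}=2s^2$, so $\mathcal I=4-8s^2$. Meanwhile $\abs{T}^2=24s^2$, since each component with three distinct indices is counted six times. Thus $\mathcal I<0$ needs $\abs{T}^2>12$. But the trace-free threshold $F_n(0)=3(n^2-4)/(2n)$ equals $63/10$ for $n=5$ and exceeds $12$ only for $n\geq9$.

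The missing idea is to make sectional curvatures negative through products $T_{111}T_{1jj}<0$. This gives twice the curvature drop per unit of $\abs{T}^2$ compared with a large $R_{1234}$. The paper takes $T_{111}=3b$, $T_{122}=b$, $T_{133}=T_{144}=-2b$. This tensor is trace free with $\abs{T}^2=36b^2$, and it gives $R_{1313}=R_{1414}=1-10b^2$, $R_{2323}=R_{2424}=1-2b^2$, $R_{1234}=0$, hence $\mathcal I=4-24b^2=4-\frac23\abs{T}^2$. The identity $pq-\frac{(p+2)(q+2)}{3(n+2)}F_n(0)=\frac{n+2}{2n}(p-2)(q-2)\geq0$ then shows that the single window $6<36b^2<F_n(0)$ handles every $2\leq p\leq n-2$. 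The window is nonempty precisely because $F_n(0)>6$ for $n\geq5$, while $F_4(0)=9/2$. So the restriction $n\geq5$ has nothing to do with a fifth index; this $T$ lives on four indices.

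The degree-one claim is not automatic either. For this $T$ one has $\mathcal L_1^0(T;u)\leq27b^2$, and $27b^2<n-1$ is compatible with $36b^2>6$ only when $n\geq6$.

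\textbf{Step~3.} Rescaling cannot repair the cutoff region. If $f_\epsilon(x)=\epsilon^3g(x/\epsilon)$, then $D^3f_\epsilon(x)=D^3g(x/\epsilon)$, so $f_\epsilon$ takes exactly the same third-derivative values as $g$. For $g=\chi P$ with an ordinary cutoff $\chi$, the terms of $D^3(\chi P)$ involving $D^j\chi$ with $j\geq1$ have size comparable to $\abs{T}$ on the transition annulus. They are neither proportional to $T$ nor trace free. At $n=5$ the margin is thin (for the paper's tensor, $6<\abs{T}^2<63/10$), and nothing in your argument keeps $\abs{\BB}^2<F_n(\abs{\HH})$ on the annulus.

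Nor can any profile put $D^3f$ exactly on the ray $\set{sT:0\leq s\leq1}$. If $D^3f=\rho T$, then $\mathrm d\rho\otimes T$ must be totally symmetric, and for tensors like these that forces $\rho$ to be constant.

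What is needed is a cutoff that is slow on a logarithmic scale, as in Lemma~\ref{lem:cubic-localization}. The function $\rho=\chi(1+\varepsilon^2\log(r/\varepsilon))$ switches off between $r=\varepsilon e^{-1/\varepsilon^2}$ and $r=\varepsilon$ with $r^j\abs{D^j\rho}\leq C\varepsilon^2$. Homogeneity of $P$ then gives $D^3(\rho P)=\rho T+O(\varepsilon^2)$ with $0\leq\rho\leq1$. Since $T$ is trace free and all the conditions are quadratic in $\sigma$, they hold along the whole ray once they hold at $T$.

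Adding a trace part, as you suggest, would lose this reduction. Then $\abs{\HH}\approx\rho\abs{\operatorname{tr}T}$ varies across the cap and $F_n$ is not monotone in $h$. You would have to verify $\rho^2\abs{T}^2<F_n(\rho\abs{\operatorname{tr}T})$ separately for every $\rho\in[0,1]$.
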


The construction uses the following local realization of a trace-free cubic tensor.

\begin{lem}\label{lem:cubic-localization}
Let $T\in\operatorname{Sym}^3(\mathbb R^n)^*$ be trace free. For all sufficiently small $\varepsilon>0$, there is a Legendrian embedding $F_\varepsilon:\mathbb S^n\to \mathbb S^{2n+1}$, equal to the standard totally geodesic embedding outside a cap, whose cubic form is exactly $T$ at the north pole in the standard tangent frame. On the cap, an orthonormal frame and a function $0\leq\rho\leq1$ can be chosen such that
\begin{equation}\label{eq:cubic-localization}
\abs{\sigma-\rho T}\leq C\varepsilon^2,\qquad \abs{\HH}\leq C\varepsilon^2,\qquad \abs{\BB}^2\leq \abs{T}^2+C\varepsilon^2.
\end{equation}
Here $C$ depends on $n,T$ and a fixed cutoff, but not on $\varepsilon$.
\end{lem}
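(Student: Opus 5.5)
We construct $F_\varepsilon$ through the standard local description of Legendrian submanifolds by generating functions: the Legendrian condition is preserved by perturbing a Legendrian along a contact Hamiltonian flow, or equivalently via the cone/Lagrangian picture. We use the Lagrangian cone picture, since it gives an exact formula for the cubic form.

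Near the north pole of the totally geodesic $\mathbb S^n=\mathbb S^{2n+1}\cap\mathbb R^{n+1}$, identify a neighbourhood of the cone $C(\mathbb S^n)=\mathbb R^{n+1}\subset\mathbb C^{n+1}$ with a neighbourhood of the zero section of $T^*\mathbb R^{n+1}$. Legendrian submanifolds near $\mathbb S^n$ correspond to graphs of $\mathrm d f$, with $f$ homogeneous of degree two on $\mathbb R^{n+1}\setminus0$, i.e.\ exact conical Lagrangians $\{x+\mathrm i\,\nabla f(x)\}$. Equivalently, on $\mathbb S^n$ one uses a function $g$ and the contact lift $x\mapsto \cos(\cdot)\,x+\ldots$; we use the cone formulation. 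The cubic form of the conical Lagrangian graph of $\nabla f$ at a point where $\nabla f$ and $\nabla^2 f$ vanish equals the third derivative $\nabla^3 f$, restricted to $T\mathbb S^n$. Since degree-two homogeneity forces the radial contractions of $\nabla^3 f$ to be determined by lower derivatives, the tangential third derivative at the pole is free.

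Choose coordinates $y\in\mathbb R^n$ on the cap around the pole. Set
\begin{equation*}
g_\varepsilon(y)=\tfrac16\,\chi(|y|/\varepsilon)\,T(y,y,y),
\end{equation*}
where $\chi$ is a fixed cutoff equal to $1$ near $0$. Extend $g_\varepsilon$ homogeneously to $f_\varepsilon(x)=|x|^2 g_\varepsilon(\text{gnomonic coordinate of }x)$. Then $g_\varepsilon$ has size $O(\varepsilon^3)$, its first derivatives are $O(\varepsilon^2)$, its second derivatives are $O(\varepsilon)$, and its third derivatives are $\rho T+O(1)$ cutoff terms. Here $\rho=\chi(|y|/\varepsilon)$ for the main term, but derivatives falling on $\chi$ contribute bounded terms.

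This is the key obstacle: the naive estimate only gives $|\sigma-\rho T|=O(1)$. To get $O(\varepsilon^2)$, I would instead take the cutoff in the \emph{cubic} rather than the function, i.e.\ choose $g_\varepsilon$ whose Hessian of third derivatives is exactly $\rho(y)T$ up to error. That is impossible for nonconstant $\rho$ in general, so the honest route is to define $\rho$ via the radial structure. Put $g_\varepsilon(y)=\tfrac16\,\varepsilon^3\,\beta(|y|/\varepsilon)\,T(\hat y,\hat y,\hat y)\,(|y|/\varepsilon)^3$ with $\beta$ chosen so that the derivative terms combine into $\rho T$ plus trace terms. The trace-free hypothesis on $T$ is used here: a trace-free cubic $T(y,y,y)$ is a harmonic polynomial, so its products with radial functions have third derivatives whose non-$\rho T$ parts are pure trace (multiples of $y\otimes$ identity symmetrized). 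These parts feed into $\HH$, and we must verify that they are also of size $O(\varepsilon^2)$ after rescaling. I would check this via the scaling $y=\varepsilon w$. The exact cubic form is then a fixed smooth tensor field in $w$ plus corrections from the nonlinear terms in $\nabla^2 f$ and from the curvature of the sphere metric, which are $O(\varepsilon)$ relative to the ambient and, by parity/symmetry, $O(\varepsilon^2)$.

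Given \eqref{eq:cubic-localization}, the remaining claims are routine. At the pole the cubic form equals $T$ exactly, since all corrections vanish where $\nabla f=\nabla^2 f=0$. The mean curvature is the trace of $\sigma$; since $T$ is trace free, $|\HH|\le C\varepsilon^2$. Finally $|\BB|^2=|\sigma|^2\le(\rho|T|+C\varepsilon^2)^2\le|T|^2+C\varepsilon^2$. Embeddedness for small $\varepsilon$ holds because $F_\varepsilon$ is a $C^1$-small perturbation of the standard embedding, and it agrees with the standard embedding outside the cap by construction.
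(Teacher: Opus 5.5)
\textbf{There is a genuine gap.} It sits exactly at the step you flag as the key obstacle, and your proposed remedy rests on a false claim. Your rewritten $g_\varepsilon=\tfrac16\varepsilon^3\beta(\abs{y}/\varepsilon)(\abs{y}/\varepsilon)^3T(\hat y,\hat y,\hat y)$ is the same function $\beta P$ as before, with $P(y)=T(y,y,y)/6$. Here $\beta$ also denotes the radial function $y\mapsto\beta(\abs{y}/\varepsilon)$. The error is
$D^3(\beta P)-\beta T=3\operatorname{Sym}(D\beta\otimes D^2P)+3\operatorname{Sym}(D^2\beta\otimes DP)+P\,D^3\beta$.
On the transition annulus $\abs{D^j\beta}\sim\varepsilon^{-j}$ and $\abs{D^{3-j}P}\sim\varepsilon^{j}$, so each term is of order one.

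Harmonicity of $P$ does not make these terms pure trace. For $P=y_1y_2y_3$ at $y=(r,0,\ldots,0)$, the $(1,2,3)$-component of the error equals $r\,\partial_r\beta$. Every symmetrization of $v\otimes\delta$ has vanishing $(1,2,3)$-component.

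The trace part alone is already fatal. From $\Delta P=0$ and Euler's identity, $\operatorname{tr}D^3(\beta P)=D\bigl(P\,(\partial_r^2\beta+(n+5)r^{-1}\partial_r\beta)\bigr)$. This is of order one and cannot vanish identically for a genuine cutoff, because $\partial_r^2\beta+(n+5)r^{-1}\partial_r\beta\equiv0$ forces $\beta=c_1+c_2r^{-(n+4)}$. Hence $\abs{\HH}$ would be of order one, contradicting \eqref{eq:cubic-localization}.

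The rescaling $y=\varepsilon w$ confirms this rather than curing it. $D_y^3g_\varepsilon=D_w^3\bigl(\beta(\abs{w})T(w,w,w)/6\bigr)$ is an $\varepsilon$-independent tensor field at distance of order one from $\beta T$, and no parity argument removes it.

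\textbf{Missing idea.} You need a cutoff that varies slowly on a \emph{logarithmic} scale. The paper takes $\rho=\chi\bigl(1+\varepsilon^2\log(r/\varepsilon)\bigr)$. It equals $1$ for $r\leq\varepsilon e^{-1/\varepsilon^2}$, vanishes for $r\geq\varepsilon$, and satisfies $r^j\abs{D^j\rho}\leq C\varepsilon^2$ for $1\leq j\leq3$. Combined with $\abs{D^{3-j}P}\leq Cr^j$, every term in which a derivative falls on $\rho$ is $O(\varepsilon^2)$. So $D^3f=\rho T+O(\varepsilon^2)$ uniformly, and trace-freeness of $T$ is used only afterwards to get $\abs{\HH}\leq C\varepsilon^2$.

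For monotone radial $\rho$, such a long transition is unavoidable. Since $D^3f$ is the differential of $D^2f$, integrate the $\operatorname{Sym}^2$-valued one-form $D^3f=\rho T+E$ around a closed loop. Take the radial segments from $R\theta$ to $2R\theta$ and from $-2R\theta$ to $-R\theta$, joined by half circles of radii $2R$ and $R$, where $T(\theta,\cdot,\cdot)\neq0$. This gives $2\abs{T(\theta,\cdot,\cdot)}\,\abs{\int_R^{2R}r\rho'(r)\,\mathrm dr}\leq(2+3\pi)R\sup\abs{E}$. So $\rho$ drops by at most $O(\sup\abs{E})$ across each dyadic annulus, and an $O(\varepsilon^2)$ error forces the transition to spread over order $\varepsilon^{-2}$ dyadic scales.

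With this cutoff, the rest of your outline goes through as in the paper: the degree-two homogeneous extension, the Lagrangian cone $y+\mathrm i\nabla U(y)$, exactness at the pole, and embeddedness. The remaining corrections are $O(\varepsilon^2)$ for a concrete reason, not parity. The cone metric is $I+(D^2U)^2$ with $D^2U=O(r)$, and $y_0^{-1}=1+O(r^2)$ in gnomonic coordinates.
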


\begin{proof}
Set $P(w)=T(w,w,w)/6$. Choose a smooth function $\chi$ with $0\leq\chi\leq1$, equal to one on $(-\infty,0]$ and zero on $[1,\infty)$. For $r=\abs{w}>0$, put
\begin{equation*}
\rho(w)=\chi\kh{1+\varepsilon^2\log(r/\varepsilon)}, \qquad f(w)=\rho(w)P(w),\qquad \rho(0)=1,\qquad f(0)=0.
\end{equation*}
Then $f=P$ when $r\leq\varepsilon e^{-1/\varepsilon^2}$ and $f=0$ when $r\geq\varepsilon$. In particular, $f$ is smooth and $D^3f(0)=T$. On the transition annulus, $r^j\abs{D^j\rho}\leq C\varepsilon^2$ for $1\leq j\leq3$. The product rule and homogeneity of $P$ give
\begin{equation}\label{eq:cutoff-estimates}
D^3f=\rho T+O(\varepsilon^2),\qquad \abs{D^jf}\leq Cr^{3-j}\quad(0\leq j\leq2),
\end{equation}
uniformly on $\mathbb R^n$.

For $y=(y_0,y')\ne0$, define the positively homogeneous function of degree two
\begin{equation*}
U(y)=
\begin{cases}
y_0^2f(y'/y_0),&y_0>0,\\
0,&y_0\leq0.
\end{cases}
\end{equation*}
The support of $U$ lies in $\abs{y'}\leq\varepsilon y_0$. Since $U$ vanishes in a neighborhood of every point with $y_0=0$ and $y'\ne0$, it is smooth on $\mathbb R^{n+1}\setminus\set{0}$. Define
\begin{equation}\label{eq:localized-embedding}
F_\varepsilon(x)= \frac{x+i\nabla U(x)}{\sqrt{1+\abs{\nabla U(x)}^2}}, \qquad x\in \mathbb S^n.
\end{equation}
The identity $\operatorname{Re}F_\varepsilon/\abs{\operatorname{Re}F_\varepsilon}=x$ provides a smooth left inverse, proving that $F_\varepsilon$ is an embedding. By Euler's identity $D^2U(x)x=\nabla U(x)$,
\begin{equation*}
F_\varepsilon^*\bar\alpha=\frac{\jkh{x,D^2U\,dx}-\jkh{\nabla U,dx}}{2\kh{1+\abs{\nabla U}^2}}=0,
\end{equation*}
so the embedding is Legendrian.

To estimate the cubic form, pass to the associated Lagrangian cone $\Psi(y)=y+i\nabla U(y)$. In the $y$ coordinates its metric is $G=I+(D^2U)^2$, and its cubic form is $D^3U$. In the support cone, set $w=y'/y_0$, $r=\abs{w}$ and $\ell_a=dy_a-w_a\,dy_0$. Direct differentiation gives
\begin{equation*}
D^3U=y_0^{-1}\sum_{a,b,c=1}^n f_{abc}\,\ell_a\otimes\ell_b\otimes\ell_c, \qquad \abs{\nabla U}\leq Cr^2,\qquad \abs{D^2U}\leq Cr \quad\text{on }\mathbb S^n.
\end{equation*}
Let $u_a$ be the $G$-orthogonal projection of $\partial_{y_a}$ onto the complement of the radial vector $x$, and put $\gamma_{ab}=G(u_a,u_b)$. Then
\begin{equation*}
\ell_a(u_b)=\delta_{ab},\qquad \gamma=I+O(r^2),\qquad \gamma^{-1/2}=I+O(r^2).
\end{equation*}
Uniformity follows from the following estimate: for every $z\in\mathbb R^n$,
\begin{equation*}
z^T\gamma z=\min_sG((0,z)-sx,(0,z)-sx) \geq y_0^2\abs{z}^2\geq\tfrac45\abs{z}^2 \quad(\varepsilon\leq\tfrac12).
\end{equation*}
Set $E_i=\sum_a u_a(\gamma^{-1/2})_{ai}$ and $t=\abs{\Psi(x)}=\sqrt{1+\abs{\nabla U(x)}^2}$. The ambient vectors $d\Psi_x(E_i)$ are orthonormal and perpendicular to the radial direction. By homogeneity they also form an orthonormal tangent frame on the normalized link. The cubic form scales by $t$ under this radial normalization. Thus, in that frame,
\begin{equation*}
\sigma_{ijk}=\frac{t}{y_0} \sum_{a,b,c}f_{abc}(\gamma^{-1/2})_{ai} (\gamma^{-1/2})_{bj}(\gamma^{-1/2})_{ck}.
\end{equation*}
Now $t=1+O(r^4)$, $y_0^{-1}=1+O(r^2)$, and $r\leq\varepsilon$. Equation~\eqref{eq:cutoff-estimates} gives $\sigma=\rho T+O(\varepsilon^2)$. Taking the trace and the squared norm proves \eqref{eq:cubic-localization}. At the north pole, $w=0$, $DU=D^2U=0$ and $D^3f=T$, so the displayed formula gives $\sigma=T$ exactly. Outside the cap, $U$ vanishes and the embedding is totally geodesic.
\end{proof}

For $2\leq p\leq n-2$, $q=n-p$, and $\operatorname{tr}T=0$, Lemma~\ref{lem:cubic-bound} and \eqref{eq:classical-complement} give
\begin{equation}\label{eq:tracefree-separation-bound}
\mathcal L_p^0(T;P) \leq\frac{(p+2)(q+2)}{3(n+2)}\abs{T}^2.
\end{equation}
Since $\mathcal L_p^0$ depends quadratically on the cubic form, \eqref{eq:cubic-localization} also gives, for every $1\leq p\leq n-1$ and uniformly over all planes,
\begin{equation}\label{eq:quadratic-localization}
\mathcal L_p^0(\sigma;P) =\rho^2\mathcal L_p^0(T;P)+O(\varepsilon^2).
\end{equation}

\begin{proof}[Proof of Theorem~\ref{thm:separation}]
Since $F_n(0)>6$ for $n\geq5$ and $4(n-1)/3>6$ when $n\geq6$, choose $b>0$ so that
\begin{equation*}
6<36b^2<F_n(0),\qquad 36b^2<\frac{4(n-1)}3\quad\text{if }n\geq6.
\end{equation*}
Let $T$ be the symmetric cubic tensor whose nonzero components, up to permutation of the indices, are
\begin{equation*}
T_{111}=3b,\qquad T_{122}=b,\qquad T_{133}=T_{144}=-2b.
\end{equation*}
Then $\operatorname{tr}T=0$ and $\abs{T}^2=36b^2$. For $q=n-p$ and $2\leq p\leq n-2$,
\begin{equation*}
pq-\frac{(p+2)(q+2)}{3(n+2)}F_n(0) =\frac{n+2}{2n}(p-2)(q-2)\geq0.
\end{equation*}
Thus \eqref{eq:tracefree-separation-bound} implies $\mathcal L_p^0(T;P)<pq$ for every $p$-plane $P$. Apply Lemma~\ref{lem:cubic-localization}. Since $F_n$ is locally Lipschitz near $0$, the localization estimates give
\begin{equation*}
F_n(\abs{\HH})-\abs{\BB}^2\geq F_n(0)-36b^2-C\varepsilon^2>0
\end{equation*}
for sufficiently small $\varepsilon$. Equations \eqref{eq:tracefree-separation-bound} and \eqref{eq:quadratic-localization} give, uniformly over all planes,
\begin{equation*}
\mathcal L_p^0(\sigma;P) \leq\frac{(p+2)(q+2)}{3(n+2)}\,36b^2+C\varepsilon^2<pq.
\end{equation*}
As there are only finitely many degrees, a single sufficiently small $\varepsilon$ makes all these inequalities strict.

At the north pole, $\sigma=T$ exactly. The Gauss equation gives
\begin{equation*}
R_{1313}=R_{1414}=1-10b^2,\qquad R_{2323}=R_{2424}=1-2b^2,\qquad R_{1234}=0.
\end{equation*}
Hence $\mathcal I=4-24b^2<0$.

For $n\geq6$, it remains to estimate the degree-one trace. Since $T$ is trace free, every unit vector $v$ satisfies
\begin{equation*}
\mathcal L_1^0(T;v) =2\abs{T(v,\cdot,\cdot)}^2-\abs{T(v,v,\cdot)}^2.
\end{equation*}
The tensor $T$ is supported on $E=\operatorname{span}\set{e_1,e_2,e_3,e_4}$. Write $\operatorname{pr}_E v=ru$, where $0\leq r\leq1$ and $u$ is a unit vector in $E$; the case $r=0$ is immediate. Since $\abs{T(u,u,\cdot)}^2\leq\abs{T(u,\cdot,\cdot)}^2$,
\begin{equation*}
\mathcal L_1^0(T;v) =2r^2\abs{T(u,\cdot,\cdot)}^2-r^4\abs{T(u,u,\cdot)}^2 \leq\mathcal L_1^0(T;u).
\end{equation*}
Put $s=u_2^2$ and $c=u_3^2+u_4^2$, so that $u_1^2=1-s-c$. Direct contraction gives
\begin{equation*}
\begin{aligned}
b^{-2}\abs{T(u,\cdot,\cdot)}^2=&18u_1^2+2s+8c,\\
b^{-2}\abs{T(u,u,\cdot)}^2=&(3u_1^2+s-2c)^2+4u_1^2s+16u_1^2c.
\end{aligned}
\end{equation*}
Consequently
\begin{equation*}
b^{-2}\mathcal L_1^0(T;u)=27-24s-6c-9c^2\leq27.
\end{equation*}
The choice of $b$ gives $27b^2<n-1$. Thus \eqref{eq:quadratic-localization} yields the strict inequality in degree one for sufficiently small $\varepsilon$, and $\mathcal L_{n-1}^0(P)=\mathcal L_1^0(P^\perp)$ gives the inequality in degree $n-1$.
\end{proof}

\noindent{\bf Disclosure of AI use}: During the preparation of this manuscript, the authors used ChatGPT to assist with the development and checking of mathematical proofs and to improve the language and presentation. The authors reviewed and revised the AI-assisted material and take full responsibility for the correctness of all mathematical results and proofs, as well as the entire content of the manuscript.

\vspace{0.5cm}
\noindent{\bf Data Availability Statement} This article has no associated data.\\\\
{\bf Conflict of interest} The authors have not disclosed any conflict of interest.


\end{document}